\newtheorem{theorem}{Theorem}
\newtheorem{lemma}[theorem]{Lemma}
\theoremstyle{definition}
\newtheorem{definition}[theorem]{Definition}
\newtheorem{remark}[theorem]{Remark}
\newtheorem{example}[theorem]{Example}
\newcommand{\f}[1]{\mathbf{#1}}
\definecolor{dred}{rgb}{0.92,0,0}
\journal{}
\begin{document}

\begin{frontmatter}

\title{Approximation properties over self-similar meshes of curved finite elements and applications to subdivision based isogeometric analysis}

\author[1]{Thomas Takacs}

\affiliation[1]{organization={Johann Radon Institute for Computational and Applied Mathematics (RICAM), Austrian Academy of Sciences},
	addressline={Altenberger Stra{\ss}e 69}, 
	city={Linz},
	postcode={4040}, 
	country={Austria}}
	
\begin{abstract}
In this study we consider domains that are composed of an infinite sequence of self-similar rings and corresponding finite element spaces over those domains. The rings are parameterized using piecewise polynomial or tensor-product B-spline mappings of degree $q$ over quadrilateral meshes. We then consider finite element discretizations which, over each ring, are mapped, piecewise polynomial functions of degree $p$. Such domains that are composed of self-similar rings may be created through a subdivision scheme or from a scaled boundary parameterization.

We study approximation properties over such recursively parameterized domains. The main finding is that, for generic isoparametric discretizations (i.e., where $p=q$), the approximation properties always depend only on the degree of polynomials that can be reproduced exactly in the physical domain and not on the degree $p$ of the mapped elements. Especially, in general, $L^\infty$-errors converge at most with the rate $h^2$, where $h$ is the mesh size, independent of the degree $p=q$. This has implications for subdivision based isogeometric analysis, which we will discuss in this paper.
\end{abstract}

\begin{keyword}
isoparametric finite elements \sep
approximation properties \sep
subdivision surfaces \sep
characteristic rings \sep
scaled boundary parameterizations
\end{keyword}

\end{frontmatter}

\section{Introduction}

In this paper we consider a class of self-similar geometry parameterizations that includes scaled boundary parameterizations as in~\cite{arioli2019scaled} and domains constructed via subdivision schemes. While scaled boundary parameterizations are always self-similar by definition, only the characteristic rings of subdivision surfaces are self-similar, cf.~\cite{peters2008subdivision}. In general, the subdivision surface rings around extraordinary vertices become more and more self-similar as the mesh is refined. As examples we study Doo--Sabin subdivision and Catmull--Clark subdivision, introduced in~\cite{doo1978behaviour} and in~\cite{catmull1978recursively}, respectively, in more detail.

Subdivision surfaces are an important tool in constructing smooth shapes from finite control structures. They are used in computer graphics for character animation and simulation, cf.~\cite{derose1998subdivision}, as well as e.g. in engineering applications~\cite{green2002subdivision,cirak2002integrated}. After the advent of isogeometric analysis~\cite{Hughes2005}, which aims at bringing geometric modeling and numerical simulations closer together, interest in subdivision surfaces for simulation was renewed, cf.~\cite{riffnaller2016isogeometric,pan2016isogeometric,zhang2018subdivision}. The recent report~\cite{dietz2023subdivision} has summarized tasks that need to be resolved when using subdivision based discretizations for isogeometric analysis. 

This paper intends to shed some light on Task 4.1 of~\cite{dietz2023subdivision}, the study of error estimates. There, for Catmull--Clark subdivision, the authors reason that away from extraordinary vertices the discretization space generalizes cubic B-splines and the convergence rate for $L^2$-errors (and $L^\infty$) should be $h^4$. The authors moreover speculate that the observed suboptimal convergence rates are due to the reduced polynomial reproduction near extraordinary vertices.

However, it seems that the suboptimal approximation properties of subdivision surfaces are not (only) due to the reduced polynomial reproduction near the extraordinary vertices, but also due to the behavior of the geometry mapping itself. The reason for this lies in the scaling properties of Sobolev norms and seminorms and in the inherent issues related to approximation properties of curved (isoparametric) finite elements, as will be seen later.

This paper is organized as follows. In Section~\ref{sec:domain-mesh} we introduce the types of domains and meshes that we will study. Section~\ref{sec:spaces} gives an overview of the discretization spaces and underlying Sobolev spaces that we consider. The main findings on lower bounds for approximation estimates are given in Section~\ref{sec:approximation}. We then provide several numerical examples on scaled boundary parameterizations and characteristic rings of subdivision surfaces in Section~\ref{sec:num-tests}. The implications for isogeometric methods based on subdivision surfaces are summarized in Section~\ref{sec:subdivision-considerations} and the paper is concluded in Section~\ref{sec:conclusion}.

\section{Domain and mesh structure}
\label{sec:domain-mesh}

We consider an open domain of interest $\Omega\subset\mathbb{R}^2$ that contains the origin $\f o = (0,0)^T \in \Omega$ and that is formed by an infinite sequence of open and ring-shaped subdomains
\[
 \overline{\Omega} = \bigcup_{i=0}^\infty \overline{\Omega^i},
\]
with $\Omega^i \cap \Omega^{i'} = \emptyset$, for all $i\neq i'$. We assume that each ring $\Omega^i$ is composed of $N$ elements $\{\omega^i_n\}_{n=1,\ldots,N}$, with parameterizations
\[
 \f G^i_n : B \rightarrow \omega^i_n,
\]
where $B=\left]0,1\right[^2$. Moreover, we assume that the element parameterizations $\f G^i_n$ are $C^\infty$-smooth, regular and for each $n=1,\ldots,N$ the mappings are scaled versions of the initial element mappings $\f G^0_n$, i.e., $\f G^i_n(B) = \lambda^i \f G^0_n(B)$, for some $\lambda\in\left]0,1\right[$. Consequently, the subdomains $\Omega^i$ are self-similar rings around the origin $\f o$, which satisfy
\[
 \Omega^{i+1} = \lambda \cdot \Omega^{i}.
\]

In Figure~\ref{fig:example-configs} we consider some configurations of practical interest. Note that both the disk and the triangle are scaled boundary parameterizations, where in case of the disk the circular boundary is scaled to the origin at the center and in case of the triangle the top-right edge is the boundary curve and the origin is in the bottom-left corner.
\begin{figure}[!ht]
    \centering
    \includegraphics[height=0.15\textheight]{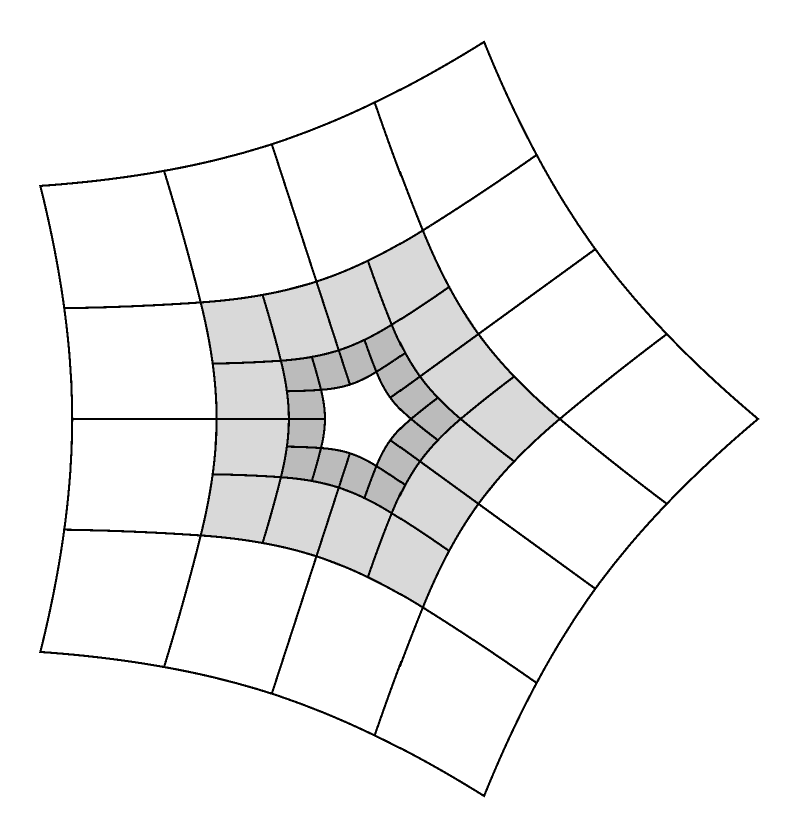} \qquad
    \includegraphics[height=0.15\textheight]{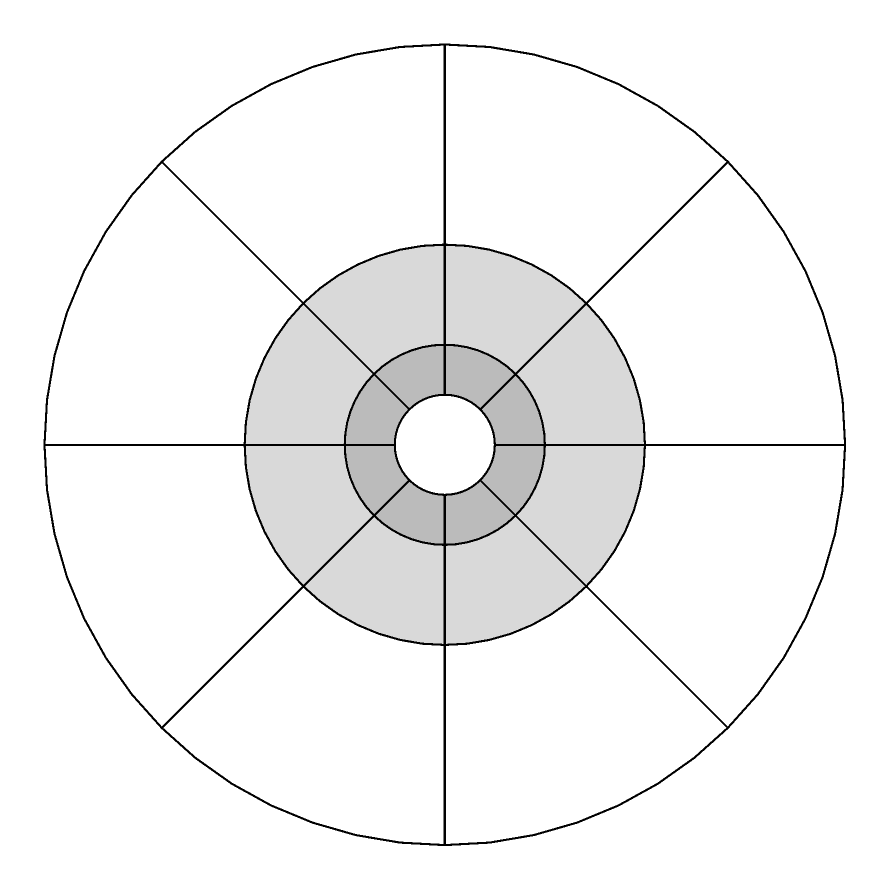} \qquad
    \includegraphics[height=0.15\textheight]{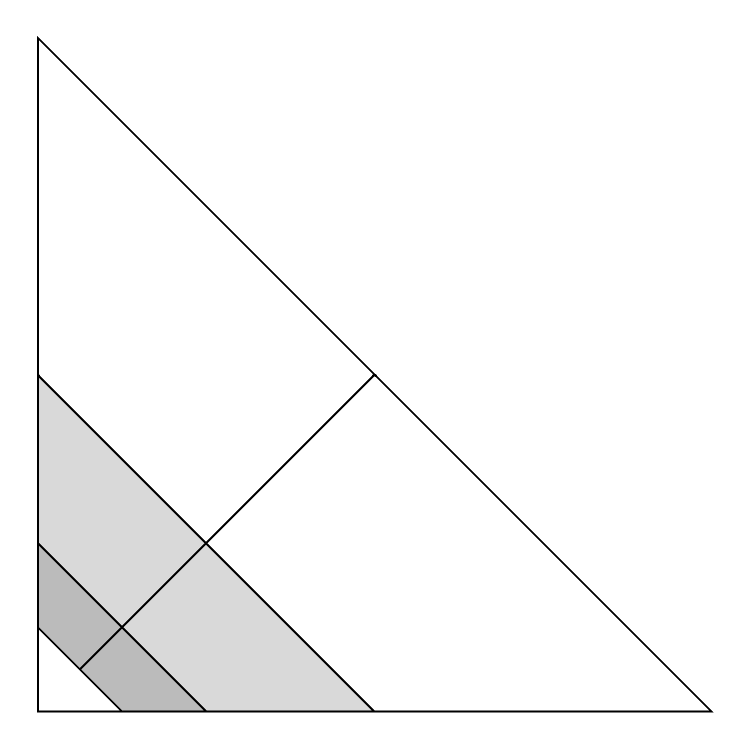}
    \caption{Examples of ring domains ($\Omega^0$ in white, $\Omega^1$ in gray and $\Omega^2$ in dark gray): A characteristic ring of Doo--Sabin subdivision (left), a polar-like parameterization (center) and a triangle, which can be interpreted as a singularly mapped quadrilateral (right). In all three examples we have $\lambda=1/2$.}\label{fig:example-configs}
\end{figure}

Let us now consider, for any number of refinements $k$, a mesh on $B$ by bisection in each direction:
\[
 {\widehat{\mathcal{M}}}^k = \left\{ \left]\frac{j_1}{2^{k}},\frac{j_1+1}{2^{k}}\right[ \times \left]\frac{j_2}{2^{k}},\frac{j_2+1}{2^{k}}\right[ : 0\leq j_1,j_2 \leq 2^{k}-1 \right\}.
\]
We can then define a mesh on $\Omega$ for a fixed level $\ell$ as
\[
 \mathcal{M}^\ell = \bigcup_{i=0}^{\ell} \mathcal{M}^\ell_i,
\]
where on all rings $\Omega^i$, with $0\leq i \leq \ell$, we consider the meshes
\[
 \mathcal{M}^\ell_i = \bigcup_{n=1}^{N} \mathcal{M}^\ell_{i,n},
\]
with
\[
 \mathcal{M}^\ell_{i,n} = \left\{ \f G^i_n\left(b\right) : b \in {\widehat{\mathcal{M}}}^{\ell-i} \right\},
\]
which is obtained by $(\ell-i)$-times bisecting the element $\omega_n^i$ of $\Omega^i$. Moreover, we ignore the structure of the finer rings $\Omega^i$, with $i \geq \ell+1$, and consider the \emph{cap}
\[
 \omega^{\ell+1}_\ast = \mbox{int}\left(\bigcup_{i=\ell+1}^\infty \overline{\Omega^i}\right) = \lambda^{\ell+1} \Omega
\]
as a single element and define the global mesh as
\[
 \mathcal{M}^\ell_\ast = \mathcal{M}^\ell \cup \{\omega^{\ell+1}_\ast\}.
\]
Figure~\ref{fig:example-meshes} depicts meshes $\mathcal{M}^2$ corresponding to the example configurations of Figure~\ref{fig:example-configs}. Note that in case of subdivision surfaces, these are exactly the meshes that one otains through dyadic refinement. However, polar or similar singular parameterizations one would usually refine through standard bisection on the entire patch, cf. Figure~\ref{fig:examples_scaled-bdr_fix} (center right). However, the mesh refinement proposed here yields locally quasi-uniform and shape-regular meshes and, in case of a triangle mapping as in Figure~\ref{fig:example-meshes} (right), optimal approximation can be shown, cf.~\cite{takacs2015approximation}.
\begin{figure}[!ht]
    \centering
    \includegraphics[height=0.15\textheight]{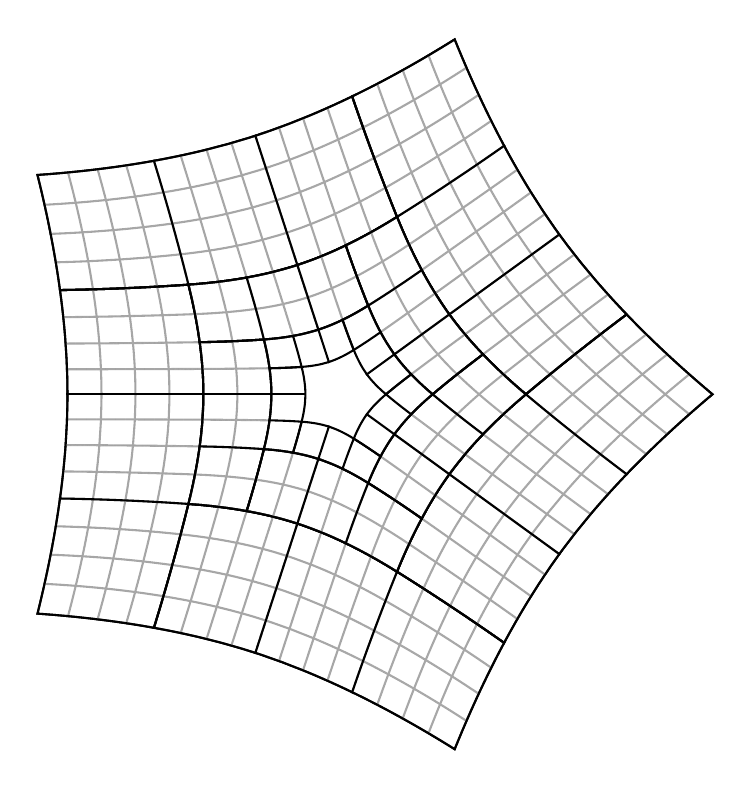} \qquad
    \includegraphics[height=0.15\textheight]{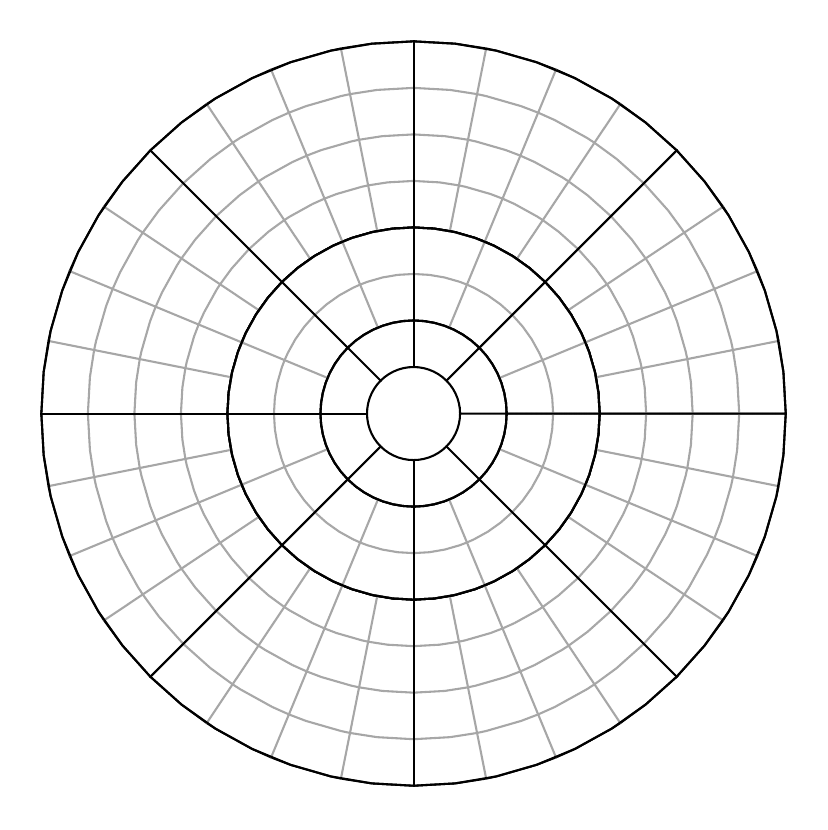} \qquad
    \includegraphics[height=0.15\textheight]{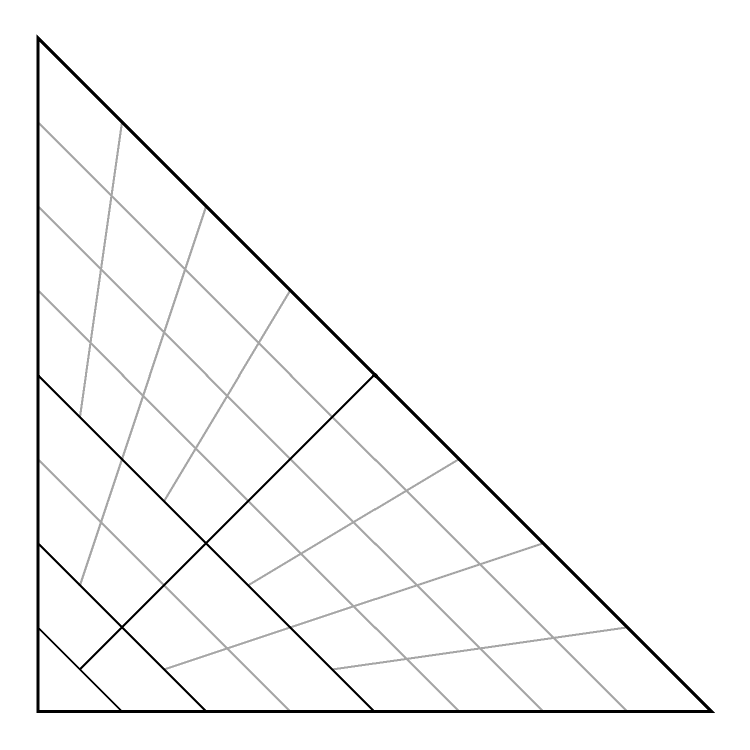}
    \caption{Examples of meshes for $\ell=2$. Note that the further outside the ring, the more refined it is. This results in a quasi-uniform mesh.}\label{fig:example-meshes}
\end{figure}

Even though we consider here only dyadic refinement on each ring $\Omega^i$, all results carry over to other types of refinement as well.
We have the following bounds on the size of mesh elements.
\begin{lemma}\label{lem:element-size}
 For arbitrary $\ell$ and $0\leq i \leq \ell$ we have 
 \[
  \mathrm{diam}(\omega) \sim \frac{1}{2^{\ell-i}}\lambda^i,
 \]
  for all $\omega \in \mathcal{M}^\ell_i$, as well as
 \[
  \mathrm{diam}(\omega^{\ell+1}_\ast) \sim \lambda^{\ell+1}.
 \]
 Here $\mathrm{diam}(\omega)$ denotes the diameter of the element $\omega$. We use the notation $a \lesssim b$ if the exists a constant $c$ which is independent of $\ell$, $i$ and $\lambda$, such that $a \leq c \, b$, and say $a \sim b$ if $a \lesssim b$ and $b \lesssim a$.
\end{lemma}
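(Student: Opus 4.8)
The plan is to reduce the estimate to the fixed finite family of coarsest element maps $\f G^0_1,\dots,\f G^0_N$ and the trivial scaling of diameters under a homothety. Fix $\ell$, an index $0\le i\le\ell$, and an element $\omega\in\mathcal M^\ell_i$. By the definition of the mesh, $\omega=\f G^i_n(\hat\omega)$ for some $n\in\{1,\dots,N\}$ and some dyadic square $\hat\omega\in\widehat{\mathcal M}^{\ell-i}$, which has side length $2^{-(\ell-i)}$ and hence diameter $\mathrm{diam}(\hat\omega)=\sqrt2\,2^{-(\ell-i)}$. Since the element maps on the $i$-th ring are scaled copies of those on ring $0$, we have $\f G^i_n(b)=\lambda^i\f G^0_n(b)$ for all $b\in B$, so $\omega=\lambda^i\f G^0_n(\hat\omega)$ and consequently $\mathrm{diam}(\omega)=\lambda^i\,\mathrm{diam}(\f G^0_n(\hat\omega))$. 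It therefore suffices to prove $\mathrm{diam}(\f G^0_n(\hat\omega))\sim \mathrm{diam}(\hat\omega)$ with comparison constants that depend only on $\f G^0_n$, and then to pass to the minimum and maximum of these constants over the finitely many values of $n$.

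For the upper bound I would use that each $\f G^0_n$ is $C^\infty$ on the closed, convex square $\overline B$, hence Lipschitz with constant $L_n:=\sup_{\overline B}\|D\f G^0_n\|<\infty$, giving $\mathrm{diam}(\f G^0_n(\hat\omega))\le L_n\,\mathrm{diam}(\hat\omega)$. For the lower bound I would use that $\f G^0_n$ is regular (non-vanishing Jacobian) and injective on the compact set $\overline B$; a standard compactness argument then produces a constant $c_n>0$ with $|\f G^0_n(u)-\f G^0_n(v)|\ge c_n\,|u-v|$ for all $u,v\in\overline B$ --- argue by contradiction along sequences $u_k\ne v_k$ with $|\f G^0_n(u_k)-\f G^0_n(v_k)|<\frac1k|u_k-v_k|$, passing to convergent subsequences and separating the case of distinct limits (which contradicts injectivity) from that of a common limit (which contradicts non-degeneracy of $D\f G^0_n$ after normalizing the difference quotient). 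Thus $\mathrm{diam}(\f G^0_n(\hat\omega))\ge c_n\,\mathrm{diam}(\hat\omega)$. Setting $c:=\min_{1\le n\le N}c_n$ and $C:=\sqrt2\,\max_{1\le n\le N}L_n$, both depending only on the fixed coarse parameterizations and not on $\ell$, $i$ or $\lambda$, the two bounds combine with the previous paragraph to give
\[
 c\,\lambda^i\,2^{-(\ell-i)} \;\le\; \mathrm{diam}(\omega) \;\le\; C\,\lambda^i\,2^{-(\ell-i)}
\]
for every $\omega\in\mathcal M^\ell_i$, which is the first assertion. For the cap, $\omega^{\ell+1}_\ast=\lambda^{\ell+1}\Omega$ gives directly $\mathrm{diam}(\omega^{\ell+1}_\ast)=\lambda^{\ell+1}\,\mathrm{diam}(\Omega)$, and since $\Omega$ is a fixed bounded domain, $\mathrm{diam}(\Omega)$ is a positive finite constant, so $\mathrm{diam}(\omega^{\ell+1}_\ast)\sim\lambda^{\ell+1}$.

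I expect the only real point requiring care to be the uniform lower bound $c_n>0$: it must hold with a constant independent of which dyadic subsquare $\hat\omega$ is selected, which forces one to use regularity and injectivity of $\f G^0_n$ up to the boundary $\partial B$, and not merely on the open square $B$. Geometrically this is precisely the statement that the coarsest mesh is shape regular; it is consistent with the assumptions because in the examples where a genuine singularity is present (e.g.\ the triangle viewed as a singularly mapped quadrilateral) that singularity is confined to the cap $\omega^{\ell+1}_\ast$, so the ring element maps $\f G^0_n$ remain regular. Everything else is bookkeeping: the homothety $x\mapsto\lambda^i x$ scales all diameters exactly by $\lambda^i$, and there are only $N$ coarse maps over which the constants must be taken uniform.
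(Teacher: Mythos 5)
Your proof is correct and follows essentially the same route as the paper's: decompose $\omega=\lambda^i\f G^0_n(\hat\omega)$ with $\hat\omega\in\widehat{\mathcal M}^{\ell-i}$ of diameter $\sqrt2\,2^{-(\ell-i)}$, and treat the cap by the exact scaling $\mathrm{diam}(\omega^{\ell+1}_\ast)=\lambda^{\ell+1}\mathrm{diam}(\Omega)$. The only difference is that you spell out the bi-Lipschitz bounds (and the boundary-regularity caveat) that the paper compresses into ``the bounds follow from the regularity of $\f G^0_n$.''
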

\begin{proof}
 For each $\omega \in \mathcal{M}^\ell_i$ there exists an $n$, such that
 \[
  \omega = \f G^i_n\left(b\right) = \lambda^i \f G^0_n\left(b\right),
 \]
 where $b \in {\widehat{\mathcal{M}}}^{\ell-i}$. The second equality follows from the definiton of $\f G^i_n$. We have $\mbox{diam}(b) = \sqrt{2}/{2^{\ell-i}}$ and the bounds follow from the regularity of $\f G^0_n$. By definition $\mbox{diam}(\omega^{\ell+1}_\ast) = \lambda^{\ell+1} \mbox{diam}(\Omega)$, which concludes the proof.
\end{proof}
We define the mesh size $h$ of $\mathcal{M}^\ell_\ast$ as $h = \max_{\omega\in \mathcal{M}^\ell_\ast}( \mbox{diam}(\omega) )$. Lemma~\ref{lem:element-size} yields the following bound for the mesh size
\begin{equation}
 h \sim \max ( \lambda,{1}/{2} )^\ell.
\end{equation}
Thus, the meshes are locally quasi-uniform and, for $\lambda=1/2$, the sequence of meshes is quasi-uniform. 

\section{Spaces and scaling properties}
\label{sec:spaces}

In the following we introduce the discretization spaces and Sobolev spaces that we will consider as well as their scaling properties when defined over rings $\Omega^i$.

\subsection{Discretization spaces}

For a given domain $D\subset \mathbb{R}^2$, we denote by $\mathbb{Q}^k[D]$ the space of polynomials of maximum degree $k$ in each direction and by $\mathbb{P}^k[D]$ the space of polynomials of total degree $k$ over $D$. If the domain follows from context, we simply write $\mathbb{Q}^k$ or $\mathbb{P}^k$.

\begin{definition}[Space of piecewise polynomials]
For any $\ell$, $i$ and $n$, with $0 \leq i \leq \ell$ and $1\leq n\leq N$, we define on the element $\omega \in \mathcal{M}^\ell_{i,n}$ the local mapped polynomial space $\mathcal{S}[p;\omega]$ as
\[
 \mathcal{S}[p;\omega] = \{ \varphi : \omega \rightarrow \mathbb{R} \;,\; \varphi \circ \f G^i_n \in \mathbb{Q}^{p}\}.
\]
Moreover, let $\mathcal{S}[p;\omega^{\ell+1}_\ast]$ be a suitable finite-dimensional space on the cap $\omega^{\ell+1}_\ast$. For any subset $\Theta$ of the mesh elements $\Theta \subseteq \mathcal{M}^\ell_\ast$ we define the piecewise polynomial space over $\theta \subseteq \Omega$, with $\overline{\theta}=\bigcup_{\omega \in \Theta} \overline{\omega}$, as 
\[
 \mathcal{S}[p;\Theta] = \{ \varphi \in L^2(\theta) : \varphi|_{\omega} \in \mathcal{S}[p;\omega], \; \mbox{ for all }\omega \in \Theta \}.
\]
\end{definition}
We assume that all element mappings $\f G^i_n$ are polynomials of maximum degree $q$, i.e., $\f G^i_n \in (\mathbb{Q}^{q})^2$. We call the spaces isoparametric, if $p=q$.
\begin{definition}[Reproduction degree]
Let $\omega \in \mathcal{M}^\ell_\ast$. The \emph{reproduction degree} $\kappa = \kappa[\omega] \in \mathbb{Z}$ of the space $\mathcal{S}[p;\omega]$ is the largest integer, such that $\mathbb{P}^{\kappa}[\omega] \subseteq \mathcal{S}[p;\omega]$.
\end{definition}
\begin{lemma}
For all $\omega \in \mathcal{M}^\ell_{i,n}$ and for a fixed degree $p$, the reproduction degree $\kappa[\omega]$ satisfies
\[
 \kappa[\omega] = \kappa[\omega^0_n],
\]
i.e., it is inherited from the coarsest mesh.
\end{lemma}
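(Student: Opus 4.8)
The plan is to exploit the self-similarity relation $\f G^i_n = \lambda^i \f G^0_n$, i.e., the element mapping on ring $i$ is the composition of the coarse mapping $\f G^0_n$ with the scaling $\f s_\lambda^i : \f x \mapsto \lambda^i \f x$. The key observation is that scaling by a constant factor is an affine (indeed linear) map, so it preserves polynomial spaces of every kind, both the tensor-product spaces $\mathbb{Q}^k$ and the total-degree spaces $\mathbb{P}^k$. I would first set up notation: fix $\omega \in \mathcal{M}^\ell_{i,n}$ with $\omega = \f G^i_n(b) = \lambda^i \f G^0_n(b)$ for some $b \in {\widehat{\mathcal{M}}}^{\ell-i}$, and let $\omega^0 = \f G^0_n(b) \subseteq \omega^0_n$ be the corresponding subset of the coarse element. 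Note that scaling also commutes with bisection of the parameter domain, so $\omega = \f s_\lambda^i(\omega^0)$.

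Next I would show the inclusion $\mathcal{S}[p;\omega] = \mathcal{S}[p;\omega^0] \circ (\f s_\lambda^i)^{-1}$, or more precisely that $\varphi \in \mathcal{S}[p;\omega]$ if and only if $\varphi \circ \f s_\lambda^i \in \mathcal{S}[p;\omega^0]$. This follows directly from the definition: $\varphi \circ \f G^i_n = \varphi \circ \f s_\lambda^i \circ \f G^0_n$, so $\varphi \circ \f G^i_n \in \mathbb{Q}^p$ iff $(\varphi \circ \f s_\lambda^i) \circ \f G^0_n \in \mathbb{Q}^p$, which is exactly the condition $\varphi \circ \f s_\lambda^i \in \mathcal{S}[p;\omega^0]$. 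Then, since $\mathbb{P}^\kappa$ is invariant under the linear change of variables $\f s_\lambda^i$ (a polynomial of total degree $\kappa$ stays of total degree $\kappa$ after scaling each coordinate), we get $\mathbb{P}^\kappa[\omega] \subseteq \mathcal{S}[p;\omega]$ if and only if $\mathbb{P}^\kappa[\omega^0] \subseteq \mathcal{S}[p;\omega^0]$. Taking the largest such $\kappa$ on both sides gives $\kappa[\omega] = \kappa[\omega^0]$.

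Finally I would remove the dependence on the particular sub-element $b \in {\widehat{\mathcal{M}}}^{\ell-i}$, i.e., show $\kappa[\omega^0] = \kappa[\omega^0_n]$, so that the reproduction degree on any piece of the bisected coarse element equals that on the whole coarse element. The same argument applies: $\omega^0 = \f G^0_n(b)$ with $b$ a dyadic subsquare of $B = \left]0,1\right[^2$, and restriction of the defining condition $\varphi \circ \f G^0_n \in \mathbb{Q}^p$ to a subdomain is again governed by the same polynomial space — a polynomial in $\mathbb{Q}^p$ restricted to a subdomain stays in $\mathbb{Q}^p$, and conversely one needs that $\mathbb{Q}^p$-membership is a purely algebraic (local) condition, so it is detected on any open subset. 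Likewise $\mathbb{P}^\kappa$-membership in the physical variables is detected on any open subset of $\omega^0_n$. Hence $\kappa[\omega^0] = \kappa[\omega^0_n]$, and combining with the previous step yields $\kappa[\omega] = \kappa[\omega^0_n]$.

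The only mildly delicate point — the "main obstacle," though it is minor — is being careful that these are genuine equalities of reproduction degrees and not just inequalities: one direction ("$\kappa[\omega] \geq \kappa[\omega^0_n]$") is the easy containment, while the other ("$\kappa[\omega] \leq \kappa[\omega^0_n]$") needs the fact that a function lying in $\mathcal{S}[p;\omega]$ and agreeing with a total-degree-$\kappa$ polynomial there pulls back, via the invertible scaling and restriction maps, to the analogous object on $\omega^0_n$; this is where the linearity of $\f s_\lambda^i$ and the locality of polynomial membership are essential. Everything else is bookkeeping with the definitions of $\mathcal{S}[p;\cdot]$ and $\kappa[\cdot]$.
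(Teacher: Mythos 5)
Your proof is correct and takes essentially the same route as the paper's (much terser) argument: the paper simply asserts that neither restricting the parameter domain nor the scaling $\f G^i_n=\lambda^i\f G^0_n$ affects the polynomial reproduction properties, which are exactly the two steps you carry out in detail. Your care about the ``hard'' direction of the restriction step --- that $\mathbb{Q}^p$-membership of the polynomial $\psi\circ\f G^0_n$ is detected on any open subsquare $b$ by the identity theorem for polynomials --- makes explicit the one point the paper glosses over.
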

\begin{proof}
 All elements $\omega \in \mathcal{M}^\ell_{i,n}$ are constructed from $\omega^0_n$ by restricting the mapping $\f G^i_n$ to a subdomain of $B$ and by scaling $\f G^i_n=\lambda^i \f G^0_n$. Neither restricting the parameter domain nor scaling have an effect on the polynomial reproduction properties of the space.
\end{proof}
\begin{example}\label{exa:example-kappa}
In the following we want to give an example for which we explicitly compute the reproduction degree. Let us consider the element mapping
\[
\begin{array}{rccl}
 \f G :& B &\rightarrow& \omega \\
  &(u,v)&\mapsto& (x,y)^T = \left(u+u^2v-u^2v^2,v\right)^T,
\end{array}
\]
which satisfies $\f G\in \mathbb{Q}^{2}$, see Figure~\ref{fig:example-kappa}. 

\begin{figure}[!ht]
    \centering
    \includegraphics[height=0.1\textheight]{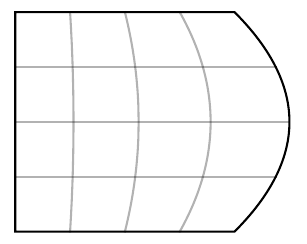}
    \caption{Element $\omega$ from Example~\ref{exa:example-kappa}: The element degree is two and the reproduction degree of the space $\mathcal{S}[p;\omega]$ is $\kappa=\lfloor \frac{p}{2} \rfloor$.}\label{fig:example-kappa}
\end{figure}

Let us study the reproduction degree $\kappa$ of $\mathcal{S}[p;\omega]$ for $p\in\{2,3,4\}$. we consider first a quadratic polynomial $\psi \in \mathbb{P}^{2}[\omega]$, with general coefficients
\[
 \psi(x,y) = c_{0,0} + c_{1,0}\, x + c_{0,1}\, y +c_{2,0}\, x^2 + c_{1,1}\, xy + c_{0,2}\, y^2.
\]
To determine $\kappa$ we need to compute the composition
\[
 \psi \circ \f G(u,v) = c_{0,0} + c_{1,0}\, (u+u^2v-u^2v^2) + c_{0,1}\, v +c_{2,0}\, (u+u^2v-u^2v^2)^2 + c_{1,1}\, (u+u^2v-u^2v^2)v + c_{0,2}\, v^2.
\]
Let us now check whether $\psi \in \mathcal{S}[p;\omega]$ for $p\in\{2,3,4\}$, which is equivalent to checking $\psi \circ \f G \in \mathbb{Q}^p$. We have
\[
 c_{0,0} + c_{1,0}\, (u+u^2v-u^2v^2) + c_{0,1}\, v + c_{0,2}\, v^2 \in \mathbb{Q}^2,
\]
\[
 c_{1,1}\, (u+u^2v-u^2v^2)v \in \mathbb{Q}^3\quad\mbox{ but }\quad c_{1,1}\, (u+u^2v-u^2v^2)v \notin \mathbb{Q}^2,
\]
and
\[
 c_{2,0}\, (u+u^2v-u^2v^2)^2 \in \mathbb{Q}^4\quad\mbox{ but }\quad c_{2,0}\, (u+u^2v-u^2v^2)^2 \notin \mathbb{Q}^3,
\]
for general non-zero coefficients $c_{i,j}$. 
Thus $\mathcal{S}[2;\omega]$ only reproduces linears and one quadratic term, consequently the reproduction degree $\kappa[\omega]=1$ in this case. The space $\mathcal{S}[3;\omega]$ reproduces linears and two quadratic terms, nevertheless one quadratic term cannot be reproduced and $\kappa[\omega]=1$ also for $\mathcal{S}[3;\omega]$. However, for $p=4$ all quadratic terms can be reproduced and $\kappa[\omega]=2$ for $\mathcal{S}[4;\omega]$. It is easy to see that in this example we have $\kappa[\omega]=\lfloor \frac{p}{2} \rfloor$ for all $\mathcal{S}[p;\omega]$.
\end{example}
All polynomials of total degree $\kappa[\omega]$ in physical coordinates can be reproduced by functions from the local space $\mathcal{S}[p;\omega]$. For all elements $\omega$ we have $\kappa[\omega] \geq \lfloor \frac{p}{q} \rfloor$. If $\omega$ is a generic element of bi-degree $(q,q)$, then $\kappa[\omega] = \lfloor \frac{p}{q} \rfloor$. In our further analysis we will ignore the precise definition of the space $\mathcal{S}[p;\omega^{\ell+1}_\ast]$ on the cap, as it does not really matter for the global convergence properties. What matters is only the reproduction degree $\kappa[\omega^{\ell+1}_\ast]$.

\subsection{Sobolev spaces and scaling relations}

For a given, open domain $\theta\subset \mathbb{R}^2$, with sufficiently regular boundary, we denote by $H^{r}(\theta)$ the Sobolev space of order $r$, that is, the subspace of $L^2(\theta)$ where all derivatives up to order $r$ are in $L^2(\theta)$ as well. Let $\Theta$ be a mesh on $\theta$, then we denote by $\mathcal{H}^r(\theta;\Theta)$ the broken Sobolev space, which has the norm
\[
\|\varphi \|_{\mathcal{H}^r(\theta;\Theta)}^2= \sum_{\sigma=0}^r |\varphi |_{\mathcal{H}^\sigma(\theta;\Theta)}^2
\]
and seminorms
\[
|\varphi |_{\mathcal{H}^\sigma(\theta;\Theta)}^2= \sum_{\omega \in \Theta} |\varphi |_{H^\sigma(\omega)}^2.
\]
Here we denote with $|\varphi|_{H^\sigma(\omega)}$ the standard Sobolev seminorm on the element $\omega$.

Sobolev seminorms scale with the size of the domain. 
\begin{lemma}\label{lem:scaling}
Let $\omega = \mu \omega^0$ and let $r\geq 0$. Then we have
\begin{equation}\label{eq:scaling-seminorms}
 |\varphi|_{H^r(\omega)} = \mu^{1-r} |\varphi \circ \f m|_{H^r(\omega^0)},
\end{equation}
where $\f m : \omega^0 \rightarrow \omega$, with $\f m(\f x) = \mu \f x$. 
\end{lemma}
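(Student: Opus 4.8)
The plan is to reduce \eqref{eq:scaling-seminorms} to the elementary affine change-of-variables formula for Sobolev seminorms, establishing it first for smooth $\varphi$ and then invoking density of $C^\infty(\overline{\omega})$ in $H^r(\omega)$. Throughout, write $\phi := \varphi\circ\f m$ for the pulled-back function on $\omega^0$.

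The first step is to track how derivatives transform under the linear scaling $\f m(\f x)=\mu\f x$. By the chain rule, each differentiation in the reference variable picks up one factor of $\mu$, so for every multi-index $\alpha$ one has $D^\alpha\phi(\f y)=\mu^{|\alpha|}\,(D^\alpha\varphi)(\f m(\f y))$. In particular, for $|\alpha|=r$ this reads $D^\alpha\phi(\f y)=\mu^{r}\,(D^\alpha\varphi)(\mu\f y)$.

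The second step is the substitution $\f x=\f m(\f y)=\mu\f y$ in $|\varphi|_{H^r(\omega)}^2=\sum_{|\alpha|=r}\int_\omega|D^\alpha\varphi(\f x)|^2\,d\f x$. Since $\f m$ has Jacobian $\mu^2\,\mathrm{Id}$ in $\mathbb{R}^2$, we get $d\f x=\mu^2\,d\f y$, and combining with the derivative identity from the first step yields $\int_\omega|D^\alpha\varphi|^2\,d\f x=\mu^{2}\,\mu^{-2r}\int_{\omega^0}|D^\alpha\phi|^2\,d\f y$. Summing over $|\alpha|=r$ gives $|\varphi|_{H^r(\omega)}^2=\mu^{2-2r}\,|\phi|_{H^r(\omega^0)}^2$, and taking square roots produces exactly \eqref{eq:scaling-seminorms}. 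For non-integer $r$ the same manipulation applied to the Gagliardo--Slobodeckij seminorm gives the identical exponent, since $1-r=\tfrac{d}{2}-r$ with $d=2$ in either case.

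There is no genuine obstacle here; the only point requiring care is the bookkeeping of powers of $\mu$ — one factor $\mu^{2}$ coming from the area element and competing with $\mu^{-2r}$ coming from differentiating $r$ times — together with the observation that the net exponent is dimension dependent (in $\mathbb{R}^d$ one would obtain the factor $\mu^{d/2-r}$), which is why the statement is phrased for the planar setting $\Omega\subset\mathbb{R}^2$ fixed at the outset.
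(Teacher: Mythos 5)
Your proof is correct and follows essentially the same route as the paper: the chain rule gives $D^\alpha(\varphi\circ\f m)=\mu^{|\alpha|}(D^\alpha\varphi)\circ\f m$, the substitution $\f x=\mu\f y$ contributes the Jacobian factor $\mu^2$, and the two combine to the exponent $2-2r$. The additional remarks on density and on fractional $r$ are harmless but not needed, since the paper only uses integer orders.
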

\begin{proof}
Let $\f m(X,Y) = (\mu X, \mu Y) = (x,y)$. Then this substitution of variables yields
\[
 |\varphi|_{H^r(\omega)}^2 = \int_{\omega} \sum_{\alpha_x+\alpha_y=r} \left( \frac{\partial}{\partial x^{\alpha_x}}\frac{\partial}{\partial y^{\alpha_y}} \varphi \right)^2 \; \mathrm{d}\f x = \int_{\omega^0} \sum_{\alpha_x+\alpha_y=r} \left( \frac{1}{\mu^r} \frac{\partial}{\partial X^{\alpha_x}}\frac{\partial}{\partial Y^{\alpha_y}} \Phi \right)^2 \mu^2 \; \mathrm{d}\f X = \mu^{2-2r} |\Phi|_{H^r(\omega^0)}^2,
\]
where $\Phi = \varphi \circ \f m$. 
\end{proof}
We will use these scaling relations to study approximation errors of the piecewise polynomial spaces defined over self-similar meshes as defined above.

\section{Approximation properties}
\label{sec:approximation}

In this section we study approximation properties of the spaces $\mathcal{S}[p;\mathcal{M}^\ell_\ast]$. We first discuss the general setting and then derive explicit lower bounds for convergence rates in several norms.

\subsection{The general setting}

When studying approximation properties of piecewise polynomial spaces and spline spaces for numerical analysis, one is usually interested in estimates of the form:
\begin{equation}\label{eq:approximation-template}
 \inf_{\varphi_h \in \mathcal{S}^\ell} | \varphi - \varphi_h |_{H^{r}(\Omega)} \lesssim \varrho(\ell) \, \|\varphi\|_{H^s(\Omega)},
\end{equation}
for all $\varphi \in H^s(\Omega)$, with $0\leq r < s\leq p+1$. Here 
\begin{itemize}
 \item the space $\mathcal{S}^\ell$ equals $\mathcal{S}[p;\mathcal{M}^\ell_\ast]$ or is a subspace of the piecewise polynomial space, usually assuming some order of continuity, e.g., $\mathcal{S}^\ell = \mathcal{S}[p;\mathcal{M}^\ell_\ast]\cap C^k(\Omega\setminus \omega^{\ell+1}_\ast)$, for some $k$,
 \item $\varrho(\ell)$ is the convergence rate as a function of the level $\ell$ (alternatively described with respect to the mesh size $h$) and
 \item the bound might depend in some non-trivial way on the shape and parameterization of $\Omega$.
\end{itemize}
In this study, we are interested in a maximum rate $\varrho$, such that such an estimate can exist, i.e., we find a function $\varphi^* \in H^{p+1}(\Omega)$ (actually a global polynomial in physical coordinates), such that an estimate
\begin{equation}\label{eq:approximation-worst-case}
 \inf_{\varphi_h \in \mathcal{S}[p;\mathcal{M}^\ell_\ast]} | \varphi^* - \varphi_h |_{\mathcal{H}^{r}(\Omega;\mathcal{M}^\ell_\ast)} \gtrsim \varrho(\ell) \, \|\varphi^*\|_{H^{p+1}(\Omega)}
\end{equation}
exists. 

In the following we focus on the approximation properties of piecewise polynomials on the self-similar rings $\Omega^i$ and not so much on the possibly suboptimal approximation properties in the cap $\omega^{\ell+1}_\ast$. The reason for this is that at the cap it is relatively easy and cheap to enrich the space sufficiently, to raise the reproduction degree $\kappa[\omega^{\ell+1}_\ast]$. So, for all practical purposes, one can assume that the error at the cap is negligible. However, if a specific construction is studied, such as subdivision surfaces as discussed in Section~\ref{sec:subdivision-considerations} or almost-$C^1$ splines as introduced in~\cite{takacs2023almost}, then the actual reproduction degree at the cap is of importance.

\subsection{Approximation properties on a single ring}

When studying the approximation properties on a single ring, we can follow the theory on isoparametric finite elements as developed in~\cite{ciarlet1972interpolation,ciarlet2002finite} or on isogeometric discretizations as in~\cite{Bazilevs2006,beirao2014actanumerica}, e.g., cf.~\cite[Theorem 3.2]{Bazilevs2006}. Thus, considering tensor-product B-splines on the single ring $\Omega^0$, we have the following.
\begin{theorem}\label{thm:quasi-interpolant}
 Let $0\leq r < s\leq p+1$ and let $\varphi \in H^{s}(\Omega^0)$. We assume that there exists a spline space $\mathcal{S}^\ell \subseteq \mathcal{S}[p;\mathcal{M}^\ell_0] \cap H^{r}(\Omega^0)$, which locally reproduces mapped polynomials, and a projector $\Pi^{\ell}: L^{2}(\Omega^0) \rightarrow \mathcal{S}^\ell$, which is locally bounded in $L^2(\Omega^0)$. Then we have
\begin{equation}
 \inf_{\varphi_h \in \mathcal{S}^\ell} | \varphi - \varphi_h |_{H^r(\Omega^0)} \leq | \varphi - \Pi^{\ell}(\varphi) |_{H^r(\Omega^0)} \lesssim \left(\frac{1}{2^\ell}\right)^{s-r} \, \|\varphi\|_{H^{s}(\Omega^0)}.
\end{equation}
\end{theorem}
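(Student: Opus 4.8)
The first inequality is immediate, since $\Pi^\ell(\varphi)\in\mathcal S^\ell$; so it suffices to bound $|\varphi-\Pi^\ell(\varphi)|_{H^r(\Omega^0)}$. The plan is to run the standard Bramble--Hilbert argument of isoparametric and isogeometric approximation theory --- this is exactly the structure of \cite[Theorem 3.2]{Bazilevs2006}, see also \cite{ciarlet1972interpolation,ciarlet2002finite,beirao2014actanumerica} --- namely, to estimate the interpolation error element by element on a fixed reference configuration and then transport it to the physical ring through the element maps $\f G^0_n$. The one substantive point is to keep every constant independent of the refinement level $\ell$, and this is where the self-similar structure of the mesh is used.

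First I would localise and pull back patch by patch. Since $\mathcal S^\ell\subseteq H^r(\Omega^0)$ and $\varphi\in H^s(\Omega^0)$, the difference $\varphi-\Pi^\ell(\varphi)$ lies in $H^r(\Omega^0)$, and since the $\omega^0_n$ partition $\Omega^0$,
\[
 |\varphi-\Pi^\ell(\varphi)|_{H^r(\Omega^0)}^2\le\|\varphi-\Pi^\ell(\varphi)\|_{H^r(\Omega^0)}^2=\sum_{n=1}^N\|\varphi-\Pi^\ell(\varphi)\|_{H^r(\omega^0_n)}^2.
\]
Fix $n$ and set $\hat\varphi=\varphi\circ\f G^0_n$ on $B$. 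Because $\f G^0_n$ is $C^\infty$ and regular on the compact set $\overline B$, composition with $\f G^0_n$ and with its inverse is a bounded isomorphism of $H^\sigma$ for $0\le\sigma\le s$, with constants depending only on $\f G^0_n$; in particular $\|\hat\varphi\|_{H^s(B)}\sim\|\varphi\|_{H^s(\omega^0_n)}$ and $\|\psi\circ\f G^0_n\|_{H^r(B)}\sim\|\psi\|_{H^r(\omega^0_n)}$ for every $\psi\in H^r(\omega^0_n)$, uniformly in $\ell$. The pulled-back space $\{\psi\circ\f G^0_n:\psi\in\mathcal S^\ell\}$ is a subspace of the piecewise-$\mathbb Q^p$ space on the \emph{uniform} dyadic mesh $\widehat{\mathcal M}^\ell$ intersected with $H^r(B)$, and the pulled-back operator $\hat\Pi^\ell(\hat\psi):=\Pi^\ell(\hat\psi\circ(\f G^0_n)^{-1})\circ\f G^0_n$ is a projector onto that space which, since mapped polynomials pull back to polynomials, inherits from $\Pi^\ell$ its locality, its local $L^2(B)$-stability and its local $\mathbb Q^p$-reproduction, again with $\ell$-independent constants.

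Next comes the classical estimate on the reference patch. The cells of $\widehat{\mathcal M}^\ell$ are axis-aligned squares of side $H_\ell:=2^{-\ell}$, hence shape-regular and quasi-uniform. For each cell $b$ and the fixed-size union $\tilde b$ of $b$ with its neighbouring cells, let $\pi_b\in\mathbb Q^p$ be a polynomial approximating $\hat\varphi$ to optimal order (e.g. an averaged Taylor polynomial on $\tilde b$). Local $\mathbb Q^p$-reproduction gives $\hat\Pi^\ell\pi_b=\pi_b$ on $b$, so for $0\le\sigma\le r$ the triangle inequality, an inverse inequality on $b$ (polynomials of degree $p$, cell size $H_\ell$), local $L^2$-stability of $\hat\Pi^\ell$, and the Bramble--Hilbert lemma on $\tilde b$ (scaled to the unit cell) give
\[
 |\hat\varphi-\hat\Pi^\ell\hat\varphi|_{H^\sigma(b)}\le|\hat\varphi-\pi_b|_{H^\sigma(b)}+|\hat\Pi^\ell(\hat\varphi-\pi_b)|_{H^\sigma(b)}\lesssim H_\ell^{s-\sigma}\,|\hat\varphi|_{H^s(\tilde b)}.
\]
Squaring, summing over all cells $b\in\widehat{\mathcal M}^\ell$, using the bounded overlap of the patches $\tilde b$, and using $H_\ell\le1$ to absorb the lower-order seminorms, one gets $\|\hat\varphi-\hat\Pi^\ell\hat\varphi\|_{H^r(B)}\lesssim H_\ell^{s-r}\,\|\hat\varphi\|_{H^s(B)}$.

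Finally I would transport back and sum: by the norm equivalences of the second step, $\|\varphi-\Pi^\ell(\varphi)\|_{H^r(\omega^0_n)}\lesssim\|\hat\varphi-\hat\Pi^\ell\hat\varphi\|_{H^r(B)}\lesssim H_\ell^{s-r}\,\|\hat\varphi\|_{H^s(B)}\lesssim H_\ell^{s-r}\,\|\varphi\|_{H^s(\omega^0_n)}$, and squaring and summing over $n=1,\dots,N$, together with the first display, yields the claim with $H_\ell=1/2^\ell$. The step I expect to be the main obstacle is the bookkeeping hidden in the two key displays: one has to verify that the constants from the composition isomorphism, from the inverse inequality and local $L^2$-stability on the pulled-back elements, and from the Bramble--Hilbert lemma depend only on $p$, $r$, $s$, on the shapes of the fixed reference patches $\tilde b$, and on the finitely many fixed maps $\f G^0_1,\dots,\f G^0_N$ --- and not on $\ell$. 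This is exactly where self-similarity is used: every refined element map is the restriction of a fixed $\f G^0_n$ to a dyadic subcell of $B$, so the pulled-back mesh is literally the uniform dyadic mesh $\widehat{\mathcal M}^\ell$ and no geometric degeneration of the elements can occur as $\ell\to\infty$.
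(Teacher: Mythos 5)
Your argument is correct and is in substance the proof the paper relies on: the paper does not prove Theorem~\ref{thm:quasi-interpolant} itself but only remarks that it ``follows directly from [Bazilevs et al., Theorem 3.2]'', and your patchwise pull-back to the uniform dyadic mesh $\widehat{\mathcal M}^\ell$ combined with local polynomial reproduction, local $L^2$-stability, an inverse inequality and the Bramble--Hilbert lemma is exactly the content of that cited result. Nothing further is needed.
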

Such an estimate follows directly from~\cite[Theorem 3.2]{Bazilevs2006}. 

We are not going into the details of splines, isogeometric discretizations and spline projectors here. The important thing to note is that on many mesh configurations over ring-shaped domains such discretizations can be constructed (e.g. for the examples depicted in Figure~\ref{fig:example-meshes}). The existence of such splines on the ring $\Omega^0$ (and consequently also on all finer rings) requires the element parameterizations to match with sufficient continuity. E.g. the splines defined over the rings obtained through Catmull--Clark subdivision are $C^2$, thus the assumptions of Theorem~\ref{thm:quasi-interpolant} are satisfied for all $r\leq 3$. 

It is enticing to assume that these estimates on single rings induce similar estimates on the entire domain. However, going into the details of the proofs in~\cite{Bazilevs2006}, which are based on the proofs for isoparametric finite elements in~\cite{ciarlet1972interpolation}, one can find a non-trivial geometry dependence which is here hidden in the unwritten constants of the inequality. 
\begin{remark}
Following the reasoning of~\cite{ciarlet1972interpolation}, approximation errors for isoparametric finite elements converge optimally, that is, with rate $h^{s-r}$, if the local elements converge to bilinear elements sufficiently fast as their size goes to zero. This is true for the mesh $\mathcal{M}^\ell_0$ on a single ring $\Omega^0$, but not for the global mesh $\mathcal{M}^\ell$.
\end{remark}
In the following subsection we study the best approximation to polynomials in physical coordinates.

\subsection{Lower bounds for approximation errors in broken Sobolev seminorms}

Let $p$ be arbitrary but fixed and let us consider estimates for functions in the Sobolev space $H^{p+1}(\Omega)$. In the following we study bounds of the approximation error measured in a broken $\mathcal{H}^r$-seminorm, i.e., we consider the error 
\[
 \inf_{\varphi_h \in \mathcal{S}[p;\mathcal{M}^\ell_\ast]}|\varphi - \varphi_h |_{\mathcal{H}^r(\Omega;\mathcal{M}^\ell_\ast)}.
\]

Consider first the initial mesh $\mathcal{M}^0$ (without the cap) and let $\underline{\kappa}_0$ be the smallest reproduction degree of all elements in the initial mesh, i.e., 
\[
 \underline{\kappa}_0 = \min_{\omega\in \mathcal{M}^0} (\kappa[\omega]).
\]
Let moreover $\kappa[\omega^1_\ast] \geq \underline{\kappa}_0$ (this can be achieved by enriching the space sufficiently). We define the space
\begin{equation}
 \mathbb{P}^{\underline{\kappa}_0+1}_{\max}[\Omega] = \mbox{span}\{ x^{\underline{\kappa}_0+1}, x^{\underline{\kappa}_0}y, x^{\underline{\kappa}_0-1}y^2, \ldots, y^{\underline{\kappa}_0+1} \}
\end{equation}
of polynomials in physical coordinates.

\begin{lemma}\label{lem:maximizing-polynomial-Sobolev}
 Let $0 \leq r \leq \underline{\kappa}_0+1$. There exists a constant $\underline{C}_0^r$, which satisfies
\[
 \sup_{\varphi \in \mathbb{P}^{\underline{\kappa}_0+1}_{\max}[\Omega]} \inf_{\varphi_h \in \mathcal{S}[p;\mathcal{M}^0_\ast]} \frac{|\varphi - \varphi_h |_{\mathcal{H}^r(\Omega;\mathcal{M}^0_\ast)}}{\| \varphi \|_{{H}^{p+1}(\Omega)}} = \underline{C}_0^r > 0.
\]
Moreover, the supremum is attained at $\varphi^{r,\ast} \in \mathbb{P}^{\underline{\kappa}_0+1}_{\max}[\Omega]$, with $\| \varphi^{r,\ast} \|_{{H}^{p+1}(\Omega)} = 1$, satisfying 
\[
 \inf_{\varphi_h \in \mathcal{S}[p;\mathcal{M}^0_\ast]} |\varphi^{r,\ast} - \varphi_h |_{\mathcal{H}^r(\Omega;\mathcal{M}^0_\ast)} = \underline{C}_0^r.
\]
\end{lemma}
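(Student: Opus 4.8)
The plan is to reduce the claim to a compactness argument on a finite-dimensional space. First I would observe that the quantity inside the supremum is a ratio of two seminorm/norm expressions that are both \emph{homogeneous of degree one} in $\varphi$: the numerator $\inf_{\varphi_h} |\varphi-\varphi_h|_{\mathcal{H}^r(\Omega;\mathcal{M}^0_\ast)}$ is a seminorm on $\varphi$ (it is the distance, in the broken seminorm, from $\varphi$ to a linear subspace), and the denominator $\|\varphi\|_{H^{p+1}(\Omega)}$ is a genuine norm. Hence the supremum over $\mathbb{P}^{\underline{\kappa}_0+1}_{\max}[\Omega]$ equals the supremum of the numerator over the unit sphere $\{\varphi \in \mathbb{P}^{\underline{\kappa}_0+1}_{\max}[\Omega] : \|\varphi\|_{H^{p+1}(\Omega)}=1\}$, which is compact because $\mathbb{P}^{\underline{\kappa}_0+1}_{\max}[\Omega]$ is finite-dimensional. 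The map $\varphi \mapsto \inf_{\varphi_h} |\varphi-\varphi_h|_{\mathcal{H}^r(\Omega;\mathcal{M}^0_\ast)}$ is continuous (it is Lipschitz with respect to $\|\cdot\|_{H^{p+1}}$, being a distance function composed with the bounded inclusion of the finite-dimensional space into the broken $\mathcal{H}^r$-seminorm), so the supremum is attained at some $\varphi^{r,\ast}$ on the unit sphere. This gives the existence of $\underline{C}_0^r$ and the fact that it is attained; it remains only to show $\underline{C}_0^r > 0$.

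For strict positivity I would argue by contradiction: if $\underline{C}_0^r = 0$, then every $\varphi \in \mathbb{P}^{\underline{\kappa}_0+1}_{\max}[\Omega]$ would lie in the closure — hence, by finite-dimensionality, in the space itself — of $\mathcal{S}[p;\mathcal{M}^0_\ast]$ modulo the kernel of the broken $\mathcal{H}^r$-seminorm. More concretely, $\underline{C}_0^r = 0$ forces, for every such $\varphi$, the existence of $\varphi_h \in \mathcal{S}[p;\mathcal{M}^0_\ast]$ with $|\varphi - \varphi_h|_{\mathcal{H}^r(\Omega;\mathcal{M}^0_\ast)} = 0$, i.e., $\varphi - \varphi_h$ is a polynomial of degree $<r$ on each element. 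The key point is that by definition of $\underline{\kappa}_0$ there is some element $\omega \in \mathcal{M}^0$ with $\kappa[\omega] = \underline{\kappa}_0$, i.e., $\mathbb{P}^{\underline{\kappa}_0+1}[\omega] \not\subseteq \mathcal{S}[p;\omega]$. I would pick a homogeneous polynomial $\varphi \in \mathbb{P}^{\underline{\kappa}_0+1}_{\max}[\Omega]$ whose restriction to $\omega$ is not in $\mathcal{S}[p;\omega]$ (such a choice is possible since the degree-$(\underline{\kappa}_0+1)$ homogeneous part is exactly what obstructs reproduction, and the lower-degree terms are reproduced), and show that on $\omega$ no element of $\mathcal{S}[p;\omega]$ can match $\varphi$ up to a polynomial of degree $<r \le \underline{\kappa}_0+1$ — because adding such a low-degree polynomial does not change the degree-$(\underline{\kappa}_0+1)$ part, which lies outside $\mathcal{S}[p;\omega]$. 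This contradicts $|\varphi-\varphi_h|_{H^r(\omega)} = 0$, so $\underline{C}_0^r > 0$.

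The main obstacle, and the step requiring the most care, is the last one: making precise why a polynomial in $\mathbb{P}^{\underline{\kappa}_0+1}_{\max}[\Omega]$ that is not reproduced on $\omega$ cannot be matched even up to an element-wise polynomial of degree $<r$. One has to use that $\mathcal{S}[p;\omega]$ and $\mathbb{P}^{r-1}[\omega]$ together cannot contain all of $\mathbb{P}^{\underline{\kappa}_0+1}[\omega]$ when $r \le \underline{\kappa}_0+1$ and $\kappa[\omega]=\underline{\kappa}_0$; since $\mathbb{P}^{r-1}[\omega] \subseteq \mathbb{P}^{\underline{\kappa}_0}[\omega] \subseteq \mathcal{S}[p;\omega]$ (the latter inclusion by definition of the reproduction degree), we actually have $\mathcal{S}[p;\omega] + \mathbb{P}^{r-1}[\omega] = \mathcal{S}[p;\omega]$, so a nonreproducible homogeneous degree-$(\underline{\kappa}_0+1)$ polynomial stays nonreproducible. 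I would also need to handle one subtlety regarding the cap: the hypothesis $\kappa[\omega^1_\ast]\ge\underline{\kappa}_0$ is stated but the obstruction I exploit lives on a genuine ring element $\omega\in\mathcal{M}^0$, not on the cap, so the cap plays no role in the positivity argument — one only needs that the infimum over $\varphi_h$ ranges over a well-defined finite- or infinite-dimensional space, which it does. Finally, the identity $\|\varphi^{r,\ast}\|_{H^{p+1}(\Omega)}=1$ and the attainment of $\underline{C}_0^r$ at this normalized maximizer are then immediate from the homogeneity reduction at the start.
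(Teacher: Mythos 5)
Your proposal is correct and follows essentially the same route as the paper: homogeneity plus compactness of the unit sphere in the finite-dimensional space $\mathbb{P}^{\underline{\kappa}_0+1}_{\max}[\Omega]$ for attainment, and for positivity the identical key observation that $|\varphi-\varphi_h|_{H^r(\omega)}=0$ forces $\varphi-\varphi_h\in\mathbb{P}^{r-1}\subseteq\mathbb{P}^{\underline{\kappa}_0}\subseteq\mathcal{S}[p;\omega]$, contradicting non-reproducibility on an element realizing $\underline{\kappa}_0$. You merely spell out the compactness and the choice of a non-reproducible homogeneous polynomial in more detail than the paper does.
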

\begin{proof}
 Since the error function is a broken Sobolev seminorm, it can be evaluated locally on all elements. For all $0\leq r \leq \underline{\kappa}_0+1$ a function $\varphi \in \mathbb{P}^{\underline{\kappa}_0+1}_{\max}[\Omega]$ exists, which cannot be reproduced on at least one element of $\underline{\mathcal{M}}^0_\ast$. If a function cannot be reproduced, its element-local ${H}^r$-seminorm cannot vanish. This can be shown by contradiction: Assume that $|\varphi- \varphi_h|_{H^r(\omega)}=0$ for some $\varphi_h \in \mathcal{S}[p;\omega]$ and $\varphi \notin \mathcal{S}[p;\omega]$. Then we have
 \[
  \varphi- \varphi_h \in \mathbb{P}^{r-1} \subseteq \mathbb{P}^{\underline{\kappa}_0} \subseteq \mathcal{S}[p;\omega],
 \]
 which implies $\varphi \in \mathcal{S}[p;\omega]$, contradicting our assumption.
 Thus, the seminorm cannot vanish and the supremum $\underline{C}_0^r$ is positive and, since the space is closed, there exists a $\varphi^{r,\ast}$ where it is attained.
\end{proof}
In the following we combine two different estimates. On the one hand, an estimate for the approximation errors on the outer ring $\Omega^0$, as $\ell$ goes to infinity, on the other hand a scaling relation between the error on the ring $\Omega^i$ and on the ring $\Omega^0$, which is based on a scaling of the domain.
\begin{lemma}
There exists a constant $\underline{c}^r$, such that
\[
 \inf_{\varphi_h \in \mathcal{S}[p;\mathcal{M}^\ell_0]} |\varphi^{r,\ast} - \varphi_h |_{\mathcal{H}^r(\Omega;\mathcal{M}^\ell_0)} \geq \underline{c}^r \left(\frac{1}{2^\ell}\right)^{p+1-r} \underline{C}_0^r,
\]
for all $\ell \geq 0$.
\end{lemma}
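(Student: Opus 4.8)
The plan is to bound the error from below by retaining only the contribution of the refinement of a single ``bad'' coarse element, and then to extract the rate from the scaling of the $H^r$-seminorm together with the number of elements produced by refinement. Since $|\cdot|_{\mathcal{H}^r(\Omega;\mathcal{M}^\ell_0)}$ is a broken seminorm, the best approximation decouples element by element, so the square of the left-hand side equals $\sum_{\omega\in\mathcal{M}^\ell_0}\inf_{\varphi_h\in\mathcal{S}[p;\omega]}|\varphi^{r,\ast}-\varphi_h|_{H^r(\omega)}^2$. Because $\underline{C}_0^r>0$ and the cap space is taken rich enough that the error of $\varphi^{r,\ast}$ is not concentrated entirely on the cap, there is a coarse ring element $\omega^0_{\bar n}\in\mathcal{M}^0$ on which $\varphi^{r,\ast}$ is not reproduced. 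By the lemma stating that the reproduction degree is inherited from the coarsest mesh, $\kappa[\omega^0_{\bar n}]=\underline{\kappa}_0$, and this value is shared by every sub-element $\omega\subseteq\omega^0_{\bar n}$ in $\mathcal{M}^\ell_0$. It therefore suffices to bound $\sum_{\omega\subseteq\omega^0_{\bar n}}\inf_{\varphi_h\in\mathcal{S}[p;\omega]}|\varphi^{r,\ast}-\varphi_h|_{H^r(\omega)}^2$ from below.

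For each such sub-element $\omega=\f G^0_{\bar n}(b)$, $b\in\widehat{\mathcal{M}}^\ell$, I would rescale it to unit size: with $\f x=\bar{\f x}_\omega+\mu_b\,\f X$, where $\bar{\f x}_\omega$ is a vertex of $\omega$ and $\mu_b\sim 2^{-\ell}$, Lemma~\ref{lem:scaling} gives $|\varphi^{r,\ast}-\varphi_h|_{H^r(\omega)}=\mu_b^{1-r}\,|\Phi-\Phi_h|_{H^r(\widetilde\omega)}$, where $\widetilde\omega$ has diameter $\sim 1$ and its parametrization, obtained from $\f G^0_{\bar n}$ by a Taylor expansion, differs from a uniformly non-degenerate linear map by an $O(2^{-\ell})$ perturbation, uniformly in $b$ and $\ell$; moreover $\kappa[\widetilde\omega]=\underline{\kappa}_0$. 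Since $\varphi^{r,\ast}\in\mathbb{P}^{\underline{\kappa}_0+1}_{\max}[\Omega]$ is homogeneous of degree $\underline{\kappa}_0+1$, the pulled-back polynomial $\Phi$ equals $\mu_b^{\underline{\kappa}_0+1}\varphi^{r,\ast}$ plus terms of strictly lower degree in $\f X$, which lie in $\mathbb{P}^{\underline{\kappa}_0}[\widetilde\omega]\subseteq\mathcal{S}[p;\widetilde\omega]$ and so drop out of the seminorm distance. Collecting the powers of $\mu_b$, the local error on $\omega$ equals, up to fixed positive constants, $(2^{-\ell})^{\underline{\kappa}_0+2-r}\,d_b$ with $d_b:=\inf_{\widetilde\varphi_h\in\mathcal{S}[p;\widetilde\omega]}|\varphi^{r,\ast}-\widetilde\varphi_h|_{H^r(\widetilde\omega)}$.

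Summing over the $\sim 4^\ell$ sub-elements of $\omega^0_{\bar n}$ gives a bound $\gtrsim (2^{-\ell})^{2(\underline{\kappa}_0+2-r)}\sum_{b}d_b^2$. The key step, and the main obstacle, is the uniform estimate $d_b\gtrsim (2^{-\ell})^{\,p-\underline{\kappa}_0}$ for at least a fixed positive fraction of the sub-elements: granting this, $4^\ell(2^{-\ell})^{2(p-\underline{\kappa}_0)}=(2^{-\ell})^{2(p-\underline{\kappa}_0-1)}$, the sum is $\gtrsim (2^{-\ell})^{2(p+1-r)}$, and taking square roots and absorbing the fixed positive number $\underline{C}_0^r$ into $\underline{c}^r$ finishes the proof. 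This estimate on $d_b$ is exactly where the geometry, and not the degree $p$, enters. If $\underline{\kappa}_0=p$ the exponent is zero and one needs that a fixed homogeneous polynomial of degree $p+1$ cannot be approximated arbitrarily well, in the $H^r$-seminorm, by mapped $\mathbb{Q}^p$ functions over a uniformly non-degenerate, nearly affine element, so $d_b$ is bounded below by a positive constant; if $\underline{\kappa}_0<p$ the leading affine part of $\widetilde\omega$ already reproduces $\varphi^{r,\ast}$, but the $O(2^{-\ell})$ curvature correction — non-trivial on a positive fraction of the sub-elements since $\f G^0_{\bar n}$ is genuinely curved, being of reduced reproduction degree $\underline{\kappa}_0<p$ — produces an $H^r$-deficit of order $2^{-\ell}\ge (2^{-\ell})^{p-\underline{\kappa}_0}$. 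The hard part will be to exclude, uniformly over the sub-elements (whose rescaled parametrizations form a precompact family of nearly affine maps), a degenerate cancellation in this deficit; I expect this to follow from $\varphi^{r,\ast}$ being the extremal polynomial in $\mathbb{P}^{\underline{\kappa}_0+1}_{\max}[\Omega]$, any polynomial for which the deficit degenerated being reproducible on $\omega^0_{\bar n}$ and hence unable to realize the positive supremum $\underline{C}_0^r$.
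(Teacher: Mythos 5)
Your proposal sets out to actually prove what the paper dispatches in one sentence: the published proof merely calls this ``a standard bound derived from the optimal convergence rate'' on $\Omega^0$ and invokes the non-reproducibility of $\varphi^{r,\ast}$, i.e.\ it appeals to a saturation-type lower bound without proving it. Your skeleton for such a proof is the right one: decouple the broken seminorm over elements, restrict to one coarse element $\omega^0_{\bar n}$ on which $\varphi^{r,\ast}$ is not reproduced, rescale each of the $\sim 4^\ell$ sub-elements to unit size via Lemma~\ref{lem:scaling}, use the homogeneity of $\varphi^{r,\ast}\in\mathbb{P}^{\underline{\kappa}_0+1}_{\max}[\Omega]$ together with reproduction of $\mathbb{P}^{\underline{\kappa}_0}$ to strip off the lower-order terms generated by recentering, and sum. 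The exponent bookkeeping, $4^\ell (2^{-\ell})^{2(\underline{\kappa}_0+2-r)}(2^{-\ell})^{2(p-\underline{\kappa}_0)}=(2^{-\ell})^{2(p+1-r)}$, is correct, as is the observation that for $\underline{\kappa}_0<p$ a deficit of order $2^{-\ell}$ already suffices.

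The genuine gap is the step you yourself flag: the uniform lower bound $d_b\gtrsim (2^{-\ell})^{p-\underline{\kappa}_0}$ on a fixed positive fraction of sub-elements is precisely the content of the saturation theorem for curved elements, and it is not established. Moreover, the mechanism you propose for closing it does not work as stated: the extremality of $\varphi^{r,\ast}$ and the positivity of $\underline{C}_0^r$ are statements about the distance to the \emph{small} space $\mathcal{S}[p;\mathcal{M}^0_\ast]$ on the coarse mesh, whereas a degeneration of the fine-scale deficits $d_b$ would only say that $\varphi^{r,\ast}$ is unexpectedly well approximated by the much larger discontinuous space $\mathcal{S}[p;\mathcal{M}^\ell_0]$; the latter is not contradicted by $\underline{C}_0^r>0$, so ``the deficit degenerates $\Rightarrow$ $\varphi^{r,\ast}$ is reproducible on $\omega^0_{\bar n}$'' does not follow. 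Note also that on individual nearly-affine sub-elements the deficit can genuinely vanish (e.g.\ when $\underline{\kappa}_0=p$ and the top-degree part of $\varphi^{r,\ast}$ happens to lie in the element's local $\mathbb{Q}^p$ frame), so the ``positive fraction'' qualifier is doing real work and needs an argument. The standard way to supply it is a Taylor/Bramble--Hilbert argument exhibiting a derivative functional of order $p+1$ (or, for $\underline{\kappa}_0<p$, the first nonvanishing curvature correction of $\f G^0_{\bar n}$) that annihilates the rescaled local spaces but not $\varphi^{r,\ast}$, uniformly over the precompact family of rescaled elements. To be fair, the paper offers nothing beyond the assertion that this is standard, so your attempt is more informative than the published proof even though it stops at the same hard point.
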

\begin{proof}
The estimate is a standard bound derived from the optimal convergence rate $\left(\frac{1}{2^\ell}\right)^{p+1-r}$ on the ring $\Omega^0$. Since the polynomial $\varphi^{r,\ast}$ cannot be reproduced exactly, the error converges as stated. 
\end{proof}
\begin{lemma}
We have
\[
 \inf_{\varphi_h \in \mathcal{S}[p;\mathcal{M}^\ell_i]} |\varphi^{r,\ast} - \varphi_h |_{\mathcal{H}^r(\Omega^i;\mathcal{M}^\ell_i)} = \lambda^{i(\underline{\kappa}_0+2-r)} \inf_{\varphi_h \in \mathcal{S}[p;\mathcal{M}^{\ell-i}_0]} |\varphi^{r,\ast} - \varphi_h |_{\mathcal{H}^r(\Omega^0;\mathcal{M}^{\ell-i}_0)}
\]
for all $0\leq i \leq \ell$, and
\[
 \inf_{\varphi_h \in \mathcal{S}[p;\omega^{\ell+1}_\ast]} |\varphi^{r,\ast} - \varphi_h |_{{H}^r(\omega^{\ell+1}_\ast)} = \lambda^{\ell(\underline{\kappa}_0+2-r)} \inf_{\varphi_h \in \mathcal{S}[p;\omega^{1}_\ast]} |\varphi^{r,\ast} - \varphi_h |_{{H}^r(\omega^{1}_\ast)}.
\]
\end{lemma}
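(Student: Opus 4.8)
The plan is to reduce both identities to the way three objects transform under the dilation $\f m_\mu : \f x \mapsto \mu \f x$, namely the mesh, the local spaces $\mathcal{S}[p;\cdot]$, and the $H^r$-seminorm, and then to exploit that $\varphi^{r,\ast}$ is a homogeneous polynomial. Since no continuity is imposed across element interfaces in $\mathcal{S}[p;\mathcal{M}^\ell_i]$, the best-approximation error in the broken seminorm decouples,
\[
 \inf_{\varphi_h \in \mathcal{S}[p;\mathcal{M}^\ell_i]} |\varphi^{r,\ast} - \varphi_h |_{\mathcal{H}^r(\Omega^i;\mathcal{M}^\ell_i)}^2
 = \sum_{\omega \in \mathcal{M}^\ell_i} \inf_{g \in \mathcal{S}[p;\omega]} |\varphi^{r,\ast} - g |_{H^r(\omega)}^2,
\]
so it suffices to prove the scaling identity element by element and then sum.

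First I would record the mesh correspondence: from $\f G^i_n = \lambda^i \f G^0_n$ and $\mathcal{M}^\ell_{i,n} = \{\f G^i_n(b) : b \in \widehat{\mathcal{M}}^{\ell-i}\}$ one gets $\mathcal{M}^\ell_{i,n} = \lambda^i \mathcal{M}^{\ell-i}_{0,n}$, so $\f m_{\lambda^i}$ is a bijection between the elements of $\mathcal{M}^\ell_i$ and those of $\mathcal{M}^{\ell-i}_0$; write $\omega = \lambda^i\omega'$ for corresponding elements. Second, I would check that the local spaces are covariant: for $\omega = \f G^i_n(b)$ and $\omega' = \f G^0_n(b)$, a function $\psi$ on $\omega'$ satisfies $\psi \in \mathcal{S}[p;\omega']$ iff $\psi \circ \f m_{\lambda^i}^{-1} \in \mathcal{S}[p;\omega]$, since $(\psi \circ \f m_{\lambda^i}^{-1}) \circ \f G^i_n = \psi \circ \f G^0_n$. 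Third, Lemma~\ref{lem:scaling} with $\mu = \lambda^i$ transfers the seminorm from $\omega$ to $\omega'$ at the price of a factor $(\lambda^i)^{1-r}$. Finally, $\varphi^{r,\ast} \in \mathbb{P}^{\underline{\kappa}_0+1}_{\max}[\Omega]$ is homogeneous of total degree $\underline{\kappa}_0+1$, hence $\varphi^{r,\ast}\circ\f m_{\lambda^i} = (\lambda^i)^{\underline{\kappa}_0+1}\varphi^{r,\ast}$.

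Putting the pieces together on a single element, and using that $\mathcal{S}[p;\omega']$ is a linear space (so multiplication by the nonzero scalar $(\lambda^i)^{\underline{\kappa}_0+1}$ is a bijection of it), I would compute
\begin{align*}
 \inf_{g \in \mathcal{S}[p;\omega]} |\varphi^{r,\ast} - g |_{H^r(\omega)}
 &= (\lambda^i)^{1-r}\inf_{\psi_h \in \mathcal{S}[p;\omega']} |\varphi^{r,\ast}\circ\f m_{\lambda^i} - \psi_h |_{H^r(\omega')} \\
 &= (\lambda^i)^{1-r}(\lambda^i)^{\underline{\kappa}_0+1}\inf_{\psi_h \in \mathcal{S}[p;\omega']} |\varphi^{r,\ast} - \psi_h |_{H^r(\omega')},
\end{align*}
i.e.\ exactly the factor $\lambda^{i(\underline{\kappa}_0+2-r)}$. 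Squaring, summing over all $\omega \in \mathcal{M}^\ell_i$ against the corresponding $\omega' \in \mathcal{M}^{\ell-i}_0$, and taking a square root (using the decoupling above on both sides) gives the first identity. The cap identity follows from the same computation with $\mu = \lambda^\ell$, which maps $\omega^1_\ast = \lambda\Omega$ onto $\omega^{\ell+1}_\ast = \lambda^{\ell+1}\Omega$, provided one reads ``suitable finite-dimensional space on the cap'' as meaning that the cap spaces are defined compatibly, $\mathcal{S}[p;\omega^{\ell+1}_\ast] = \{\psi\circ\f m_{\lambda^\ell}^{-1} : \psi \in \mathcal{S}[p;\omega^1_\ast]\}$ — the natural choice, since the cap is itself a scaled copy of $\Omega$.

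I do not anticipate a genuine obstacle; the delicate points are purely bookkeeping: justifying the element-wise decoupling (which hinges on $\mathcal{S}[p;\mathcal{M}^\ell_i]$ being the full broken space with no interelement conditions), the compatibility hypothesis on the cap space just mentioned, and tracking the exponent, where the $1-r$ from Lemma~\ref{lem:scaling} and the homogeneity degree $\underline{\kappa}_0+1$ combine into $\underline{\kappa}_0+2-r$; note also that $r \le \underline{\kappa}_0+1$ is exactly the range in which $\varphi^{r,\ast}$ and the surrounding lemmas are defined.
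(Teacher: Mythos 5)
Your proposal is correct and follows essentially the same route as the paper's proof: reduce to a single element, apply the seminorm scaling of Lemma~\ref{lem:scaling}, use the homogeneity $\varphi^{r,\ast}\circ\f m = \mu^{\underline{\kappa}_0+1}\varphi^{r,\ast}$, and observe that composition with the dilation is a bijection between the local spaces, yielding the combined factor $\mu^{\underline{\kappa}_0+2-r}$. Your explicit remark that the cap spaces must be defined compatibly under scaling is a point the paper passes over silently, but otherwise the arguments coincide.
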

\begin{proof}
Since the error functions are piecewise defined, to show the first equation, we can consider a single element $\omega^i \in \mathcal{M}^\ell_i$. From Lemma~\ref{lem:scaling} with $\mu = \lambda^i$ we obtain
\[
 \inf_{\varphi_h \in \mathcal{S}[p;\omega^i]}|\varphi^{r,\ast} - \varphi_h |_{{H}^r(\omega^i)} =
 \mu^{1-r} \inf_{\varphi_h \in \mathcal{S}[p;\omega^i]} |\varphi^{r,\ast} \circ \f m - \varphi_h \circ \f m|_{H^r(\omega^0)},
\]
where $\mu \omega^0 = \omega^1$. In addition, for a monomial $\varphi = x^{\alpha} y^{\beta}$ we have the relation
\begin{equation}\label{eq:scaling-monomials}
 \varphi \circ \f m = \mu^{\alpha+\beta}\varphi.
\end{equation}
Thus, we get $\varphi^{r,\ast} \circ \f m = \mu^{\underline{\kappa}_0+1} \varphi^{r,\ast}$, since $\varphi^{r,\ast} \in \mathbb{P}^{\underline{\kappa}_0+1}_{\max}[\Omega]$. So we get
\begin{eqnarray}
 \inf_{\varphi_h \in \mathcal{S}[p;\omega^i]}|\varphi^{r,\ast} - \varphi_h |_{{H}^r(\omega^i)} &=&
 \mu^{\underline{\kappa}_0+2-r} \inf_{\varphi_h \in \mathcal{S}[p;\omega^i]} |\varphi^{r,\ast} - \mu^{-\underline{\kappa}_0-1}\varphi_h \circ \f m|_{H^r(\omega^0)} \nonumber \\
 &=& \mu^{\underline{\kappa}_0+2-r} \inf_{\hat{\varphi}_h \in \mathcal{S}[p;\omega^0]} |\varphi^{r,\ast} - \hat{\varphi}_h |_{H^r(\omega^0)},\nonumber
\end{eqnarray}
since the polynomial spaces are the same, i.e., for each $\varphi_h \in \mathcal{S}[p;\omega^1]$ there exists a $\hat{\varphi}_h \in \mathcal{S}[p;\omega^0]$ with $\varphi_h \circ \f m = \mu^{\underline{\kappa}_0+1}\hat{\varphi}_h$ and vice versa. The same scaling can be done for the error on the cap $\omega^{\ell+1}_\ast$, with scaling factor $\mu=\lambda^\ell$, which completes the proof.
\end{proof}
We can now prove a global lower bound.
\begin{theorem}\label{thm:global-lower}
Let $0\leq r \leq \underline{\kappa}_0+1$ and let $\varphi^{r,\ast}$ be as in Lemma~\ref{lem:maximizing-polynomial-Sobolev}. Then we have
\[
 \inf_{\varphi_h \in \mathcal{S}[p;\mathcal{M}^\ell_\ast]}|\varphi - \varphi_h |_{\mathcal{H}^r(\Omega;\mathcal{M}^\ell_\ast)} \geq \min(1,\underline{c}^r) \underline{C}_0^r \left( \sum_{i=0}^{\ell} A^{2i} B^{2(\ell-i)} \right)^{1/2},
\]
where $A=\lambda^{\underline{\kappa}_0+2-r}$ and $B={1}/{2^{p+1-r}}$.
\end{theorem}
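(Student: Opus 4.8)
The plan is to exploit the fact that both the error functional and the approximation space split element by element, so that the global estimate reduces to a sum of the ring-wise bounds already established. Concretely, take $\varphi = \varphi^{r,\ast}$. Since $\mathcal{S}[p;\mathcal{M}^\ell_\ast]$ only requires its members to lie in $L^2(\Omega)$ — no continuity is imposed across element interfaces — and since the broken seminorm $|\cdot|_{\mathcal{H}^r(\Omega;\mathcal{M}^\ell_\ast)}^2$ is by definition the sum of the element-local seminorms $|\cdot|_{H^r(\omega)}^2$, the best approximation decouples over elements and hence over the rings $\mathcal{M}^\ell_0,\dots,\mathcal{M}^\ell_\ell$ and the cap $\omega^{\ell+1}_\ast$:
\[
 \Bigl(\inf_{\varphi_h \in \mathcal{S}[p;\mathcal{M}^\ell_\ast]}|\varphi - \varphi_h|_{\mathcal{H}^r(\Omega;\mathcal{M}^\ell_\ast)}\Bigr)^2 = \sum_{i=0}^{\ell}\Bigl(\inf_{\varphi_h \in \mathcal{S}[p;\mathcal{M}^\ell_i]}|\varphi - \varphi_h|_{\mathcal{H}^r(\Omega^i;\mathcal{M}^\ell_i)}\Bigr)^2 + \Bigl(\inf_{\varphi_h \in \mathcal{S}[p;\omega^{\ell+1}_\ast]}|\varphi - \varphi_h|_{H^r(\omega^{\ell+1}_\ast)}\Bigr)^2 .
\]
Since the cap term is nonnegative and the sum on the right-hand side of the theorem carries no cap contribution, I would simply discard it and keep the ring contributions as a lower bound.

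For the $i$-th ring I would then chain the two preceding lemmas. The self-similar scaling lemma gives the identity $\inf_{\varphi_h \in \mathcal{S}[p;\mathcal{M}^\ell_i]}|\varphi - \varphi_h|_{\mathcal{H}^r(\Omega^i;\mathcal{M}^\ell_i)} = \lambda^{i(\underline{\kappa}_0+2-r)}\inf_{\varphi_h \in \mathcal{S}[p;\mathcal{M}^{\ell-i}_0]}|\varphi - \varphi_h|_{\mathcal{H}^r(\Omega^0;\mathcal{M}^{\ell-i}_0)}$, and the single-ring lower bound gives $\inf_{\varphi_h \in \mathcal{S}[p;\mathcal{M}^{\ell-i}_0]}|\varphi - \varphi_h|_{\mathcal{H}^r(\Omega^0;\mathcal{M}^{\ell-i}_0)} \geq \underline{c}^r (1/2^{\ell-i})^{p+1-r}\underline{C}_0^r$; multiplying these, the $i$-th term is at least $\underline{c}^r \underline{C}_0^r A^i B^{\ell-i}$ with $A = \lambda^{\underline{\kappa}_0+2-r}$ and $B = 1/2^{p+1-r}$. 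Substituting into the displayed identity, factoring out $(\underline{c}^r\underline{C}_0^r)^2$, replacing $\underline{c}^r$ by $\min(1,\underline{c}^r)$ to obtain a constant uniform in $\ell$ and $i$, and taking the square root yields exactly the asserted bound $\min(1,\underline{c}^r)\underline{C}_0^r\bigl(\sum_{i=0}^\ell A^{2i}B^{2(\ell-i)}\bigr)^{1/2}$.

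The substantive content lives entirely in the two cited lemmas (the exact scaling identity across rings and the fact that $\varphi^{r,\ast}$ cannot be approximated on $\Omega^0$ faster than the optimal rate $2^{-\ell(p+1-r)}$), so the obstacle here is conceptual rather than computational: recognizing that, because the space is fully broken, the global error is literally the $\ell^2$-sum of the ring errors, whence the geometric-type sum $\sum_i A^{2i}B^{2(\ell-i)}$ appears. The one delicate bookkeeping point I would flag is the boundary term $i=\ell$: there the single-ring lemma is applied with zero refinements, where it only gives $\inf_{\varphi_h\in\mathcal{S}[p;\mathcal{M}^0_0]}|\varphi^{r,\ast}-\varphi_h|_{\mathcal{H}^r(\Omega^0;\mathcal{M}^0_0)}\geq \underline{c}^r\underline{C}_0^r$ rather than an equality with $\underline{C}_0^r$ — consistent with Lemma~\ref{lem:maximizing-polynomial-Sobolev}, which normalizes against the error over the larger mesh $\mathcal{M}^0_\ast$ including the cap — which is precisely why the factor $\underline{c}^r$, and hence the $\min(1,\underline{c}^r)$, cannot be removed from the final constant.
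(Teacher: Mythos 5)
Your proposal is correct and follows essentially the same route as the paper: decompose the broken seminorm into ring-wise (and cap) contributions, apply the scaling identity to reduce ring $i$ to ring $0$ with $\ell-i$ refinements, and invoke the single-ring lower bound to produce the terms $A^iB^{\ell-i}\,\underline{c}^r\underline{C}_0^r$. The only difference is at the tail: you discard the cap and bound the $i=\ell$ ring via the single-ring lemma with zero refinements (picking up the factor $\underline{c}^r$ there), whereas the paper keeps the cap and uses $(e^\ell_\ell)^2+(e^\ell_{\ell+1})^2=A^{2\ell}(\underline{C}_0^r)^2$ directly from the normalization of $\underline{C}_0^r$ over $\mathcal{M}^0_\ast$ — both variants yield the stated constant $\min(1,\underline{c}^r)\,\underline{C}_0^r$.
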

\begin{proof}
The error function can be split into local contributions as
\[
 \inf_{\varphi_h \in \mathcal{S}[p;\mathcal{M}^\ell_\ast]}|\varphi - \varphi_h |^2_{\mathcal{H}^r(\Omega;\mathcal{M}^\ell_\ast)} = \sum_{i=0}^{\ell+1} (e^\ell_i)^2,
\]
where
\[
 e^\ell_i = \inf_{\varphi_h \in \mathcal{S}[p;\mathcal{M}^\ell_i]}|\varphi - \varphi_h |_{\mathcal{H}^r(\Omega^i;\mathcal{M}^\ell_i)},
\]
for $i=0,\ldots,\ell$, and
\[
 e^\ell_{\ell+1} = \inf_{\varphi_h \in \mathcal{S}[p;\omega^{\ell+1}_\ast]}|\varphi - \varphi_h |_{{H}^r(\omega^{\ell+1}_\ast)}.
\]
Thus, we can bound 
\[
 e^\ell_i = \lambda^{i(\underline{\kappa}_0+2-r)} e^{\ell-i}_0 \geq \lambda^{i(\underline{\kappa}_0+2-r)} \underline{c}^r \left(\frac{1}{2^{\ell-i}}\right)^{p+1-r} \underline{C}_0^r = A^i B^{\ell-i} \; \underline{c}^r\underline{C}_0^r,
\]
for all $0 \leq i \leq \ell-1$, as well as
\[
 e^\ell_\ell = \lambda^{\ell(\underline{\kappa}_0+2-r)} e^{0}_0 \qquad \mbox{and}\qquad e^\ell_{\ell+1} = \lambda^{\ell(\underline{\kappa}_0+2-r)} e^{0}_1,
\]
which satisfy
\[
 (e^\ell_\ell)^2 + (e^\ell_{\ell+1})^2 = A^{2\ell} \left((e^{0}_0)^2+ (e^{0}_1)^2 \right) = A^{2\ell}(\underline{C}_0^r)^2.
\]
Hence, we obtain
\[
 \sum_{i=0}^{\ell+1} (e^\ell_i)^2 \geq \sum_{i=0}^{\ell-1} A^{2i} B^{2(\ell-i)} \; (\underline{c}^r\underline{C}_0^r)^2 + A^{2\ell}(\underline{C}_0^r)^2 \geq \min(1,(\underline{c}^r)^2) (\underline{C}_0^r)^2 \sum_{i=0}^{\ell} A^{2i} B^{2(\ell-i)}
\]
and the result follows.
\end{proof}
We assume here that the function $\varphi^{r,\ast}$ is normalized with respect to the $H^{p+1}$-norm. But note that on the finite-dimensional space $\mathbb{P}^{k_0+1}_{\max}[\Omega]$ all norms are equivalent, so any functional that is a norm on the polynomial space $\mathbb{P}^{k_0+1}_{\max}[\Omega]$ may be on the right hand side of the estimate.
\begin{remark}\label{rem:rates-summary}
Considering the setting of Theorem~\ref{thm:global-lower} we can distinguish three cases:
\begin{itemize}
 \item[(a)] $\lambda^{\underline{\kappa}_0+2-r}>1/2^{p+1-r}$
 \item[(b)] $\lambda^{\underline{\kappa}_0+2-r}=1/2^{p+1-r}$
 \item[(c)] $\lambda^{\underline{\kappa}_0+2-r}<1/2^{p+1-r}$
\end{itemize}
In case (a) the dominating term in Theorem~\ref{thm:global-lower} is $A$ and the estimate simplifies to 
\[
 \inf_{\varphi_h \in \mathcal{S}[p;\mathcal{M}^\ell_\ast]}|\varphi - \varphi_h |_{\mathcal{H}^r(\Omega;\mathcal{M}^\ell_\ast)} \gtrsim \left( \sum_{i=0}^{\ell} A^{2\ell} ({B}/{A})^{2(\ell-i)} \right)^{1/2} \sim A^\ell,
\]
where the equivalence on the right follows from 
\[
 A^{\ell} \leq \left( \sum_{i=0}^{\ell} A^{2\ell} ({B}/{A})^{2(\ell-i)} \right)^{1/2} \leq A^{\ell} \left(\frac{1}{1-(B/A)^2}\right)^{1/2}.
\]
Hence, the lower bound for the convergence rate is $\lambda^{\ell(\underline{\kappa}_0+2-r)}$. 

Equivalently, in case (c) the dominating term is $B$ and the lower bound for the convergence rate is $1/2^{\ell(p+1-r)}$, which is the optimal expected rate. 

In case (b) we have $A=B$, thus the bound in Theorem~\ref{thm:global-lower} yields 
\[
 \inf_{\varphi_h \in \mathcal{S}[p;\mathcal{M}^\ell_\ast]}|\varphi - \varphi_h |_{\mathcal{H}^r(\Omega;\mathcal{M}^\ell_\ast)} \gtrsim \left( \sum_{i=0}^{\ell} A^{2i} A^{2(\ell-i)} \right)^{1/2} = \left( \sum_{i=0}^{\ell} A^{2\ell} \right)^{1/2} = A^\ell \left( \sum_{i=0}^{\ell} 1 \right)^{1/2},
\] 
and the resulting rate is $\sqrt{\ell+1} (1/2)^{\ell(p+1-r)}$, which is slightly suboptimal.
\end{remark}

Theorem~\ref{thm:global-lower} shows that the choice of the polynomial space $\mathbb{P}^{\underline{\kappa}_0+1}_{\max}$ is crucial. While all lower order terms, i.e., all functions from $\mathbb{P}^{\underline{\kappa}_0}$, can be reproduced exactly, resulting in an error $\underline{C}_0^r = 0$, the errors produced by all higher order terms $\mathbb{P}^{{\kappa}}_{\max}$, with $\kappa>\underline{\kappa}_0+1$, will go to zero faster. Thus, when considering the Talyor expansion around the origin of any given function, the terms in the expansion that will dominate the error are the contributions from $\mathbb{P}^{\underline{\kappa}_0+1}_{\max}$.

\subsection{Approximation properties in $L^\infty$}

Using similar scaling arguments as for the $L^2$ bounds, we can show $L^\infty$-error bounds.
\begin{lemma}\label{lem:maximizing-polynomial}
Let $\varphi^{r,\ast}$ be as in Lemma~\ref{lem:maximizing-polynomial-Sobolev}. Then we have
\[
 \inf_{\varphi_h \in \mathcal{S}[p;\mathcal{M}^0]}  \| \varphi^{r,\ast} - \varphi_h \|_{L^\infty(\Omega^0)} = \underline{C}_0 > 0.
\]
\end{lemma}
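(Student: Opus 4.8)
The plan is to use that the broken space $\mathcal{S}[p;\mathcal{M}^0]$ imposes no matching conditions between elements, so that the $L^\infty$ best approximation decouples over the $N$ elements of the initial ring. Since $\|\cdot\|_{L^\infty(\Omega^0)}=\max_{\omega\in\mathcal{M}^0}\|\cdot\|_{L^\infty(\omega)}$ and $\varphi_h$ may be prescribed independently on each $\omega$, one has
\[
 \inf_{\varphi_h \in \mathcal{S}[p;\mathcal{M}^0]} \| \varphi^{r,\ast} - \varphi_h \|_{L^\infty(\Omega^0)} \;=\; \max_{\omega\in\mathcal{M}^0}\; \inf_{\psi \in \mathcal{S}[p;\omega]} \| \varphi^{r,\ast} - \psi \|_{L^\infty(\omega)} \;=:\; \underline{C}_0 ,
\]
which is finite because $\varphi^{r,\ast}$ is a polynomial and hence bounded on the bounded set $\Omega^0$. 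It therefore remains to show that the maximum on the right is strictly positive, i.e. that there is at least one element $\omega\in\mathcal{M}^0$ with a positive element-local best $L^\infty$ error.

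For a fixed $\omega$ the local space $\mathcal{S}[p;\omega]=\{\psi:\psi\circ\f G^0_n\in\mathbb{Q}^p\}$ is finite-dimensional and consists of $C^\infty$ functions on $\overline\omega$, hence a closed subspace of $L^\infty(\omega)$. Consequently $\inf_{\psi\in\mathcal{S}[p;\omega]}\|\varphi^{r,\ast}-\psi\|_{L^\infty(\omega)}>0$ if and only if $\varphi^{r,\ast}|_\omega\notin\mathcal{S}[p;\omega]$. Thus the whole statement reduces to the reproduction claim: $\varphi^{r,\ast}$ cannot be reproduced by the local polynomial space on at least one element of $\mathcal{M}^0$.

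To close that point I would argue exactly as in the proof of Lemma~\ref{lem:maximizing-polynomial-Sobolev}, but on the ring mesh $\mathcal{M}^0$ rather than on $\mathcal{M}^0_\ast$. By definition of $\underline{\kappa}_0=\min_{\omega\in\mathcal{M}^0}\kappa[\omega]$ there is a ring element $\omega^\ast\in\mathcal{M}^0$ with $\kappa[\omega^\ast]=\underline{\kappa}_0$; since $\mathbb{P}^{\underline{\kappa}_0}[\omega^\ast]\subseteq\mathcal{S}[p;\omega^\ast]$ while $\mathbb{P}^{\underline{\kappa}_0+1}[\omega^\ast]\not\subseteq\mathcal{S}[p;\omega^\ast]$, the top-degree space $\mathbb{P}^{\underline{\kappa}_0+1}_{\max}[\Omega]$ is not reproduced on $\omega^\ast$, so the subspace of $\mathbb{P}^{\underline{\kappa}_0+1}_{\max}[\Omega]$ locally reproduced on every element of $\mathcal{M}^0$ is proper; hence the (normalized) best $L^\infty$ error over $\mathcal{M}^0$ has a strictly positive, attained supremum on $\mathbb{P}^{\underline{\kappa}_0+1}_{\max}[\Omega]$. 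Because, for $r\le\underline{\kappa}_0+1$, the broken $\mathcal{H}^r$-seminorm best error and the $L^\infty$ best error on an element vanish under the same condition $\varphi^{r,\ast}|_\omega\in\mathcal{S}[p;\omega]$ (using $\mathbb{P}^{r-1}\subseteq\mathbb{P}^{\underline{\kappa}_0}\subseteq\mathcal{S}[p;\omega]$ together with the contradiction step of Lemma~\ref{lem:maximizing-polynomial-Sobolev}), this maximizer may be identified with the $\varphi^{r,\ast}$ of that lemma. I expect the only delicate point to be exactly this bookkeeping: one must ensure that the non-reproducibility underlying $\underline{C}_0^r>0$ is witnessed by a genuine ring element of $\mathcal{M}^0$ and not only by the cap $\omega^{1}_\ast$, which is precisely why the assumption $\kappa[\omega^1_\ast]\ge\underline{\kappa}_0$ (or, after enrichment, $\kappa[\omega^1_\ast]\ge\underline{\kappa}_0+1$, which annihilates the cap term for every monomial in $\mathbb{P}^{\underline{\kappa}_0+1}_{\max}[\Omega]$) is invoked.
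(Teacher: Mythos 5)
Your argument is essentially the paper's: the published proof consists of the single observation that $\varphi^{r,\ast}$ cannot be reproduced by $\mathcal{S}[p;\mathcal{M}^0]$ everywhere, so the best $L^\infty$-approximation cannot vanish, and your element-wise decomposition of the $L^\infty$-norm together with the closedness of the finite-dimensional local spaces just makes that one-liner precise. The one place where you go beyond the paper is also the one place where your write-up is not quite tight, namely the claim that the non-reproducibility of $\varphi^{r,\ast}$ is witnessed on a ring element of $\mathcal{M}^0$ rather than only at the cap. Your definition-of-$\underline{\kappa}_0$ argument shows that \emph{some} element of $\mathbb{P}^{\underline{\kappa}_0+1}_{\max}[\Omega]$ fails to be reproduced on a ring element, but the sentence ``this maximizer may be identified with the $\varphi^{r,\ast}$ of that lemma'' is not justified: the maximizer of the broken $\mathcal{H}^r$-error over $\mathcal{M}^0_\ast$ could in principle lie in the (possibly nontrivial) subspace of $\mathbb{P}^{\underline{\kappa}_0+1}_{\max}[\Omega]$ that is reproduced on every ring element, with its error concentrated at the cap, since the assumption $\kappa[\omega^1_\ast]\ge\underline{\kappa}_0$ does not force the cap to reproduce total degree $\underline{\kappa}_0+1$. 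The paper's own proof silently assumes this cannot happen, so you have correctly isolated the only genuine subtlety. The clean repairs are exactly the ones you name: either require $\kappa[\omega^1_\ast]\ge\underline{\kappa}_0+1$, so the cap contributes nothing on $\mathbb{P}^{\underline{\kappa}_0+1}_{\max}[\Omega]$ and the entire error of $\varphi^{r,\ast}$ lives on the ring, or replace $\varphi^{r,\ast}$ by any element of $\mathbb{P}^{\underline{\kappa}_0+1}_{\max}[\Omega]$ that is not reproduced on some ring element; the remark following the lemma states that this substitution only changes the constant $\underline{C}_0$, and Theorem~\ref{thm:Linfty-estimate} goes through unchanged.
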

\begin{proof}
Since $\varphi^{r,\ast}$ cannot be reproduced by $\mathcal{S}[p;\mathcal{M}^0]$ everywhere, the best approximation does not vanish. Therefore the $L^\infty$-norm of the error $\underline{C}_0$ must be non-zero.
\end{proof}
Instead of analyzing the $L^\infty$-error of the function $\varphi^{r,\ast}$, we could also take the function in $\mathbb{P}^{\underline{\kappa}_0+1}_{\max}[\Omega]$ maximizing the error for the best approximation in $L^\infty$. The results will be the same, only with a different constant $\underline{C}_0$. We have the following.
\begin{theorem}\label{thm:Linfty-estimate}
Let $\varphi^{r,\ast}$ be as in Lemma~\ref{lem:maximizing-polynomial-Sobolev}. There exists a constant $\underline{c}_0 > 0$, such that
 \[
  \inf_{\varphi_h \in \mathcal{S}[p;\mathcal{M}^\ell]} \| \varphi^{r,\ast} - \varphi_h \|_{L^\infty(\Omega)} \geq \max(\underline{C}_0 \lambda^{\underline{\kappa}_0+1},\underline{c}_0 1/2^{p+1} )^\ell.
 \]
\end{theorem}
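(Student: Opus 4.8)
The plan is to mirror the argument for Theorem~\ref{thm:global-lower}, replacing the additive split of the broken $\mathcal{H}^r$-seminorm by the elementary fact that an $L^\infty$-norm is the \emph{maximum} of the $L^\infty$-norms over subdomains. Since $\mathcal{S}[p;\mathcal{M}^\ell]$ carries no inter-ring continuity, for any global approximant $\varphi_h$ and any ring $\Omega^i$ with $0\le i\le\ell$ the restriction $\varphi_h|_{\Omega^i}$ lies in $\mathcal{S}[p;\mathcal{M}^\ell_i]$, so
\[
 \inf_{\varphi_h\in\mathcal{S}[p;\mathcal{M}^\ell]}\|\varphi^{r,\ast}-\varphi_h\|_{L^\infty(\Omega)}\ \ge\ \max_{0\le i\le\ell}\ \inf_{\psi\in\mathcal{S}[p;\mathcal{M}^\ell_i]}\|\varphi^{r,\ast}-\psi\|_{L^\infty(\Omega^i)}\ =:\ \max_{0\le i\le\ell} E_i .
\]
It then suffices to bound $E_0$ and $E_\ell$ from below.

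For the scaling I would use the $L^\infty$-analogue of Lemma~\ref{lem:scaling}, namely $\|\varphi\|_{L^\infty(\omega)}=\|\varphi\circ\f m\|_{L^\infty(\mu^{-1}\omega)}$ with \emph{no} Jacobian weight, together with the homogeneity identity $\varphi^{r,\ast}\circ\f m=\mu^{\underline{\kappa}_0+1}\varphi^{r,\ast}$ for $\f m(\f x)=\mu\f x$, which holds because $\varphi^{r,\ast}\in\mathbb{P}^{\underline{\kappa}_0+1}_{\max}[\Omega]$ is homogeneous of degree $\underline{\kappa}_0+1$ (cf.~\eqref{eq:scaling-monomials}). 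Since $\mathcal{M}^\ell_i=\lambda^i\mathcal{M}^{\ell-i}_0$ and the local polynomial spaces are invariant under this scaling, exactly as in the $\mathcal{H}^r$-case one gets
\[
 E_i=\lambda^{i(\underline{\kappa}_0+1)}\,F_{\ell-i},\qquad F_k:=\inf_{\psi\in\mathcal{S}[p;\mathcal{M}^k_0]}\|\varphi^{r,\ast}-\psi\|_{L^\infty(\Omega^0)} .
\]
Choosing $i=\ell$ gives $E_\ell=\lambda^{\ell(\underline{\kappa}_0+1)}F_0=\lambda^{\ell(\underline{\kappa}_0+1)}\,\underline{C}_0$ by Lemma~\ref{lem:maximizing-polynomial}, while $i=0$ gives $E_0=F_\ell$.

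The remaining, and main, ingredient is a single-ring lower bound $F_\ell\ge\underline{c}_0\,(1/2^{p+1})^\ell$ — i.e., the assertion that the best $L^\infty$-approximation of the fixed, non-reproducible polynomial $\varphi^{r,\ast}$ on $\Omega^0$ under $\ell$-fold dyadic refinement cannot converge faster than the optimal rate $h^{p+1}$. This is the $L^\infty$-counterpart of the lower bound invoked just before Theorem~\ref{thm:global-lower}, and it is the hard part, since the standard theory (Theorem~\ref{thm:quasi-interpolant}) only supplies the matching \emph{upper} bound. I would prove it by localizing to an element $\omega^0_{n^\ast}$ with $\mathbb{P}^{\underline{\kappa}_0+1}\not\subseteq\mathcal{S}[p;\omega^0_{n^\ast}]$, observing that membership of $\varphi^{r,\ast}\circ\f G^0_{n^\ast}$ in $\mathbb{Q}^p$ is unchanged by the affine reparameterizations used in dyadic refinement, so $\varphi^{r,\ast}$ is irreproducible on \emph{every} sub-element; pulling a generic sub-element of size $\sim 2^{-\ell}$ back to the reference square and expanding $\varphi^{r,\ast}\circ\f G^0_{n^\ast}$ in the rescaled coordinates, the non-reproducible part enters at order $2^{-\ell(p+1)}$ with a coefficient polynomial that is not identically zero, and a compactness / equivalence-of-norms argument on the reference square converts this into the claimed lower bound along a suitable subsequence of sub-elements, hence for $F_\ell$.

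Combining the two choices, $\inf_{\varphi_h}\|\varphi^{r,\ast}-\varphi_h\|_{L^\infty(\Omega)}\ge\max(E_0,E_\ell)\ge\max\!\big(\underline{c}_0\,2^{-\ell(p+1)},\ \underline{C}_0\,\lambda^{\ell(\underline{\kappa}_0+1)}\big)$, and since $\max(a^\ell,b^\ell)=\max(a,b)^\ell$ for $a,b>0$, after normalizing the level-independent constants $\underline{C}_0,\underline{c}_0$ to lie in $\left]0,1\right]$ (which only weakens the right-hand side) this is precisely the stated estimate; as in the $\mathcal{H}^r$-case, one may equally well take for $\varphi^{r,\ast}$ the polynomial in $\mathbb{P}^{\underline{\kappa}_0+1}_{\max}[\Omega]$ maximizing the $L^\infty$ best-approximation error, which changes only the constants. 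The only subtlety beyond the single-ring saturation estimate is bookkeeping of these constants so that the bound is written in the exponential form $\max(\underline{C}_0\lambda^{\underline{\kappa}_0+1},\underline{c}_0 /2^{p+1})^\ell$.
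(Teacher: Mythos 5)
Your proposal follows essentially the same route as the paper: localize to the scaled element $\omega^\ell_n=\lambda^\ell\omega^0_n$ and use the homogeneity of $\varphi^{r,\ast}$ together with the (Jacobian-free) $L^\infty$ scaling to get the $\underline{C}_0\lambda^{\ell(\underline{\kappa}_0+1)}$ branch, then invoke the single-ring saturation bound $\gtrsim 2^{-\ell(p+1)}$ for the other branch and combine via the maximum. The only difference is that you sketch a justification of the saturation lower bound, which the paper simply cites as a standard estimate for polynomial approximation in $L^\infty$.
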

\begin{proof}
 Let $\omega^0_n \in \mathcal{M}^0$ be such that $\inf_{\varphi_h \in \mathcal{S}[p;\omega^0_n]}  \| \varphi^{r,\ast} - \varphi_h \|_{L^\infty(\omega^0_n)} = \underline{C}_0$ and let $\f G^0_n$ be its parameterization. By definition, the scaled element $\omega^\ell_n = \lambda^\ell \omega^0_n$ is an element of the mesh $\mathcal{M}^\ell$. Using the mapping $\f m : \omega^0_n \rightarrow \omega^\ell_n$ between the initial and fine element with scaling factor $\mu=\lambda^\ell$ we obtain
 \[
  \inf_{\varphi_h \in \mathcal{S}[p;\omega^\ell_n]} \| \varphi^{r,\ast} - \varphi_h \|_{L^\infty(\omega^\ell_n)} = \inf_{\varphi_h \in \mathcal{S}[p;\omega^\ell_n]} \| \varphi^{r,\ast} \circ \f m - \varphi_h  \circ \f m \|_{L^\infty(\omega^0_n)}.
 \]
 Using again \eqref{eq:scaling-monomials} we get
 \[
  \inf_{\varphi_h \in \mathcal{S}[p;\omega^\ell_n]} \| \varphi^{r,\ast} - \varphi_h \|_{L^\infty(\omega^\ell_n)} = \mu^{\underline{\kappa}_0+1} \inf_{\varphi_h \in \mathcal{S}[p;\omega^\ell_n]} \| \varphi^{r,\ast} - \mu^{-\underline{\kappa}_0-1}\varphi_h  \circ \f m \|_{L^\infty(\omega^0_n)} = \lambda^{\ell(\underline{\kappa}_0+1)}\underline{C}_0.
 \]
 The bound 
 \[
  \inf_{\varphi_h \in \mathcal{S}[p;\mathcal{M}^\ell]} \| \varphi^{r,\ast} - \varphi_h \|_{L^\infty(\Omega)} \geq {c}_0 1/2^{\ell(p+1)},
 \]
 for some ${c}_0>0$, is a standard estimate for polynomial approximation in $L^\infty$. Thus the result follows.
\end{proof}
\begin{remark}
Assuming again generic isoparametric elements, i.e., $\underline{\kappa}_0=1$, the best possible rate of the $L^\infty$-error is bounded by $\lambda^{2\ell}$, which is suboptimal for $\lambda > 2^{-(p+1)/2}$. Hence, the rate can only be optimal if $\lambda \leq 2^{-(p+1)/2}$, i.e., if the rings shrink fast enough.
\end{remark}

\subsection{Extension to higher dimensional domains}\label{subsec:higher-dim}

There is no reason to restrict this study to planar domains, which we have done only to keep the presentation simple and more easily readable. All results extend also to domains $\Omega\subset \mathbb{R}^d$ of any dimension~$d$, which can be formed by rings $\Omega^i$ that are composed of mapped boxes $\{\omega^i_n\}_{n=1}^N$ via
\[
 \f G^i_n : B \rightarrow \omega^i_n,
\]
where $B=\left]0,1\right[^d$. Instead of the scaling relation~\eqref{eq:scaling-seminorms} in Lemma~\ref{lem:scaling} we have
\begin{equation*}
 |\varphi|_{H^r(\omega)} = \mu^{d/2-r} |\varphi \circ \f m|_{H^r(\omega^0)}.
\end{equation*}
Theorem~\ref{thm:global-lower} then holds with $A=\lambda^{\underline{\kappa}_0+d/2+1-r}$. However, the $L^\infty$-estimate in Theorem~\ref{thm:Linfty-estimate} remains unchanged.

\section{Numerical tests}
\label{sec:num-tests}

In this section we verify some of the theoretical findings with numerical experiments.

\subsection{Scaled boundary parameterizations}

In this subsection we compare two examples of scaled boundary parameterizations, where convergence rates are suboptimal, and also propose remedies. Sobolev regularity properties of isogeometric discretizations over such domains were studied in~\cite{Takacs2011,Takacs2012,Takacs2014}. However, approximation estimates could so far only be shown for parameterizations derived from singularly parameterized triangles, as in Figure~\ref{fig:example-configs} (right), see~\cite{takacs2015approximation}.

\paragraph{Example SB1} We consider the (single element) biquadratic scaled boundary parameterization
\begin{equation}
 \f G^0(u,v) = \left(u (2v - v^2), \, u(1 - v^2)\right)^T
\end{equation}
as in Figure~\ref{fig:examples_scaled-bdr} (left). Table~\ref{tab:example-SB1-1} shos the results when approximating the polynomials $\varphi(x,y)=x^2$ and $\varphi(x,y)=x^3$ and Table~\ref{tab:example-SB1-2} shows the results when approximating the function $\varphi(x,y)=\cos(x)+\sin(y+1)$. All results are summarized in Figure~\ref{fig:example_SB1_scaled-bdr_conv}. The convergence rates derived from Remark~\ref{rem:rates-summary} are also included. Local error contributions are shown in Figure~\ref{fig:example_SB1_scaled-bdr_local_error}.

\begin{figure}[!ht]
    \centering
    \includegraphics[height=0.15\textheight]{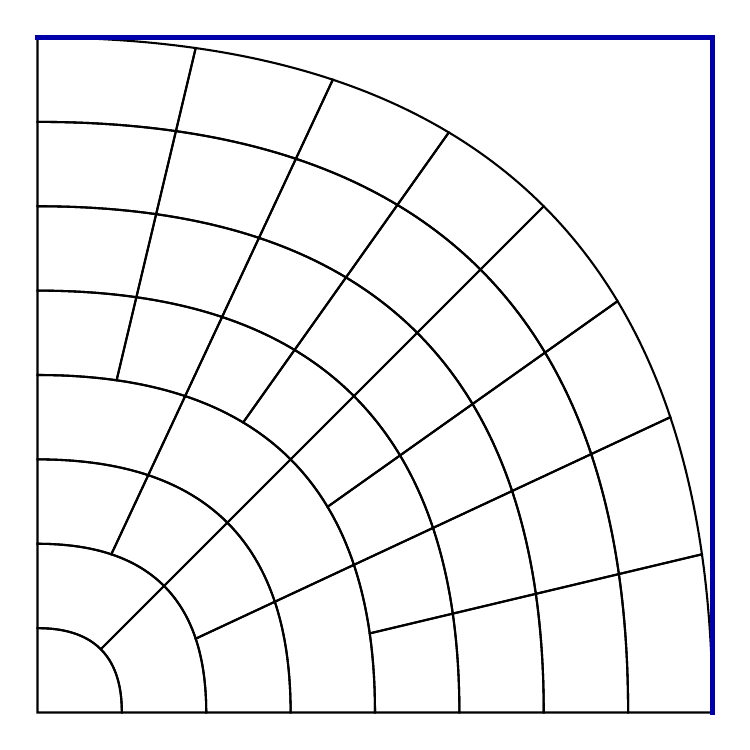} \qquad
    \includegraphics[height=0.15\textheight]{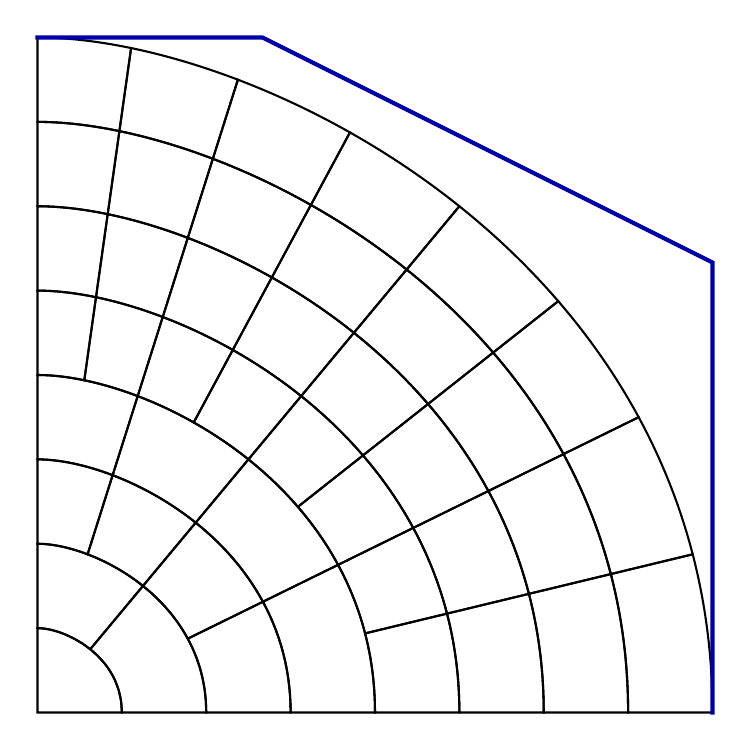}
    \caption{Example SB1 (left): A scaled boundary parameterization with a qaudratic boundary curve. Example A2 (right): A scaled boundary parameterization with a cubic boundary curve. Here we visualize the control polygons of the boundary curves in blue. The depicted mesh is that of level $\ell=2$.}\label{fig:examples_scaled-bdr}
\end{figure}

\begin{table}[ht]
 \centering
 \begin{tabular}{crrrr}
   & $p=2$ & $p=3$ & $p\geq 4$ \\ \hline
  $\ell=0$ & $2^{-8.04491}$ & $2^{-11.4522}$ & $0$ \\
  $\ell=1$ & $2^{-10.7202}$ & $2^{-14.4187}$ & $0$ \\
  $\ell=2$ & $2^{-13.4945}$ & $2^{-17.4105}$ & $0$ \\
  $\ell=3$ & $2^{-16.3223}$ & $2^{-20.4085}$ & $0$
 \end{tabular}\qquad
 \begin{tabular}{crrrr}
   & $p=2$ & $p=3$ & $p=4$ & $p=5$ \\ \hline
  $\ell=0$ & $2^{-8.48577}$ & $2^{-10.7011}$ & $2^{-13.0276}$ & $2^{-16.7594}$ \\
  $\ell=1$ & $2^{-11.1973}$ & $2^{-14.1298}$ & $2^{-16.9851}$ & $2^{-20.7569}$ \\
  $\ell=2$ & $2^{-14.0455}$ & $2^{-17.7971}$ & $2^{-20.9747}$ & $2^{-24.7567}$ \\
  $\ell=3$ & $2^{-16.9888}$ & $2^{-21.5670}$ & $2^{-24.9721}$ & $2^{-28.7567}$
 \end{tabular}
 \caption{$L^2$-errors for Example SB1, approximating the functions $\varphi(x,y)=x^2$ (left) and $\varphi(x,y)=x^3$ (right).}\label{tab:example-SB1-1}
\end{table}
\begin{table}[ht]
 \centering
 \begin{tabular}{crrrr}
   & $p=2$ & $p=3$ & $p=4$ & $p=5$ \\ \hline
  $\ell=0$ & $2^{-9.55256}$ & $2^{-11.5638}$ & $2^{-16.0955}$ & $2^{-18.8649}$ \\
  $\ell=1$ & $2^{-12.0887}$ & $2^{-14.5615}$ & $2^{-20.1640}$ & $2^{-23.3902}$ \\
  $\ell=2$ & $2^{-14.7940}$ & $2^{-17.5507}$ & $2^{-24.2867}$ & $2^{-27.7584}$ \\
  $\ell=3$ & $2^{-17.5759}$ & $2^{-20.5410}$ & $2^{-28.3667}$ & $2^{-31.9793}$
 \end{tabular}
 \caption{$L^2$-errors for Example SB1, approximating the function $\varphi(x,y)=\cos(x)+\sin(y+1)$. As expected, the rates tend to $\lfloor p/2 \rfloor +2$, i.e., $(3,3,4,4)$, instead of the optimal rate $p+1$, i.e., $(3,4,5,6)$.}\label{tab:example-SB1-2}
\end{table}

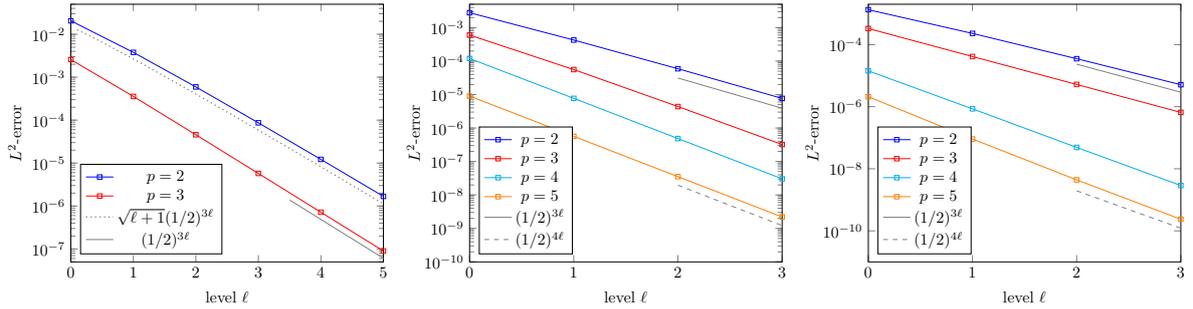
\begin{figure}[!ht]
    \centering
    \begin{tikzpicture}[scale=0.6]
  \begin{axis}[
    xlabel={level $\ell$},
    ylabel={$L^2$-error},
    xmin=0,
    xmax=5,
    ymode=log,
    ymin=0.00000005,
    ymax=0.05,
    grid=none,
    minor tick num=0,
    legend pos=south west
  ]
    \addplot[
      color=blue,
      mark=square,
      mark options={scale=0.7}
    ] coordinates {
      (0, 0.0205502)
      (1, 0.00378653)
      (2, 0.000592784)
      (3, 0.0000866477)
      (4, 0.000012204)
      (5, 0.00000167982)
    };
    \addlegendentry{$p=2$}
    
    \addplot[
      color=red,
      mark=square,
      mark options={scale=0.7}
    ] coordinates {
      (0, 0.00256724)
      (1, 0.000356895)
      (2, 0.0000456615)
      (3, 0.00000573999)
      (4, 0.000000718504)
      (5, 0.0000000898444)
    };
    \addlegendentry{$p=3$}
    
    \addplot[
      color=gray,
      dotted,
      thick,
      domain=0.1:5,
      samples=100
    ] {sqrt(x+1)*0.015*0.5^(3*x)};
    \addlegendentry{$\sqrt{\ell+1}(1/2)^{3\ell}$}
    
    \addplot[
      color=gray,
      domain=3.5:5,
      samples=100
    ] {0.002*0.5^(3*x)};
    \addlegendentry{$(1/2)^{3\ell}$}
  \end{axis}
\end{tikzpicture}
\begin{tikzpicture}[scale=0.6]
  \begin{axis}[
    xlabel={level $\ell$},
    ylabel={$L^2$-error},
    xmin=0,
    xmax=3,
    xtick={0,1,2,3},
    ymode=log,
    ymin=0.0000000001,
    ymax=0.005,
    grid=none,
    minor tick num=0,
    legend pos=south west
  ]
    \addplot[
      color=blue,
      mark=square,
      mark options={scale=0.7}
    ] coordinates {
      (0, 0.00278951)
      (1, 0.000425873)
      (2, 0.0000591384)
      (3, 0.00000768869)
    };
    \addlegendentry{$p=2$}
    
    \addplot[
      color=red,
      mark=square,
      mark options={scale=0.7}
    ] coordinates {
      (0, 0.000600684)
      (1, 0.0000557846)
      (2, 0.00000439065)
      (3, 0.000000321868)
    };
    \addlegendentry{$p=3$}
    
    \addplot[
      color=cyan,
      mark=square,
      mark options={scale=0.7}
    ] coordinates {
      (0, 0.000119754)
      (1, 0.00000770854)
      (2, 0.000000485269)
      (3, 0.0000000303837)
    };
    \addlegendentry{$p=4$}
    
    \addplot[
      color=orange,
      mark=square,
      mark options={scale=0.7}
    ] coordinates {
      (0, 0.00000901386)
      (1, 0.000000564356)
      (2, 0.0000000352761)
      (3, 0.00000000220477)
    };
    \addlegendentry{$p=5$}
    
    \addplot[
      color=gray,
      domain=2:3,
      samples=100
    ] {0.002*0.5^(3*x)};
    \addlegendentry{$(1/2)^{3\ell}$}
    
    \addplot[
      color=gray,
      dashed,
      domain=2:3,
      samples=100
    ] {0.000005*0.5^(4*x)};
    \addlegendentry{$(1/2)^{4\ell}$}
  \end{axis}
\end{tikzpicture}
\begin{tikzpicture}[scale=0.6]
  \begin{axis}[
    xlabel={level $\ell$},
    ylabel={$L^2$-error},
    xmin=0,
    xmax=3,
    xtick={0,1,2,3},
    ymode=log,
    ymin=0.00000000001,
    ymax=0.002,
    grid=none,
    minor tick num=0,
    legend pos=south west
  ]
    \addplot[
      color=blue,
      mark=square,
      mark options={scale=0.7}
    ] coordinates {
      (0, 0.00133166)
      (1, 0.000229583)
      (2, 0.000035201)
      (3, 0.00000511822)
    };
    \addlegendentry{$p=2$}
    
    \addplot[
      color=red,
      mark=square,
      mark options={scale=0.7}
    ] coordinates {
      (0, 0.000330331)
      (1, 0.0000413565)
      (2, 0.00000520841)
      (3, 0.000000655468)
    };
    \addlegendentry{$p=3$}
    
    \addplot[
      color=cyan,
      mark=square,
      mark options={scale=0.7}
    ] coordinates {
      (0, 0.0000142814)
      (1, 0.000000851215)
      (2, 0.0000000488639)
      (3, 0.00000000288922)
    };
    \addlegendentry{$p=4$}
    
    \addplot[
      color=orange,
      mark=square,
      mark options={scale=0.7}
    ] coordinates {
      (0, 0.00000209459)
      (1, 0.0000000909599)
      (2, 0.00000000440445)
      (3, 0.000000000236201)
    };
    \addlegendentry{$p=5$}
    
    \addplot[
      color=gray,
      domain=2:3,
      samples=100
    ] {0.0015*0.5^(3*x)};
    \addlegendentry{$(1/2)^{3\ell}$}
    
    \addplot[
      color=gray,
      dashed,
      domain=2:3,
      samples=100
    ] {0.0000005*0.5^(4*x)};
    \addlegendentry{$(1/2)^{4\ell}$}
  \end{axis}
\end{tikzpicture}
    \caption{Example SB1: $L^2$-error when approximating the functions $\varphi(x,y)=x^2$ (left), $\varphi(x,y)=x^3$ (center) and $\varphi(x,y)=\cos(x)+\sin(y+1)$ (right). Note that the function $\varphi(x,y)=x^2$ can be represented exactly for $p\geq 4$.}\label{fig:example_SB1_scaled-bdr_conv}
\end{figure}

\begin{figure}[!ht]
    \centering
    \includegraphics[height=0.2\textheight]{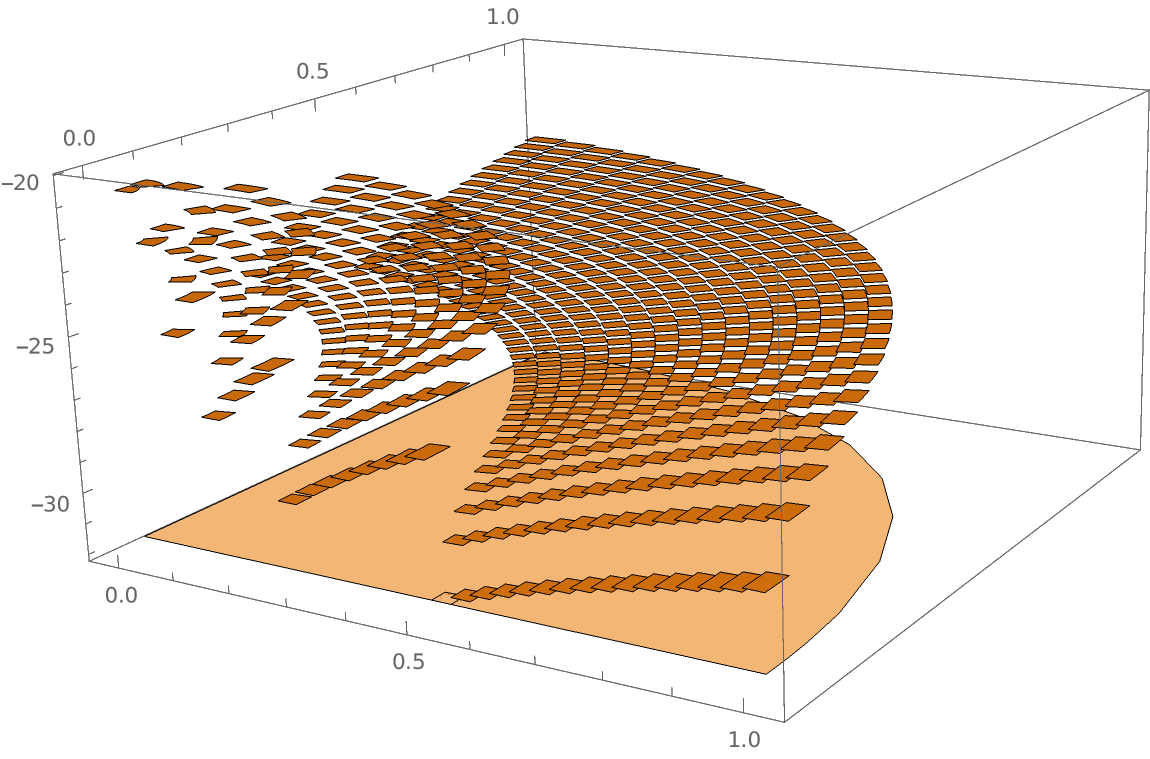} \qquad
    \includegraphics[height=0.2\textheight]{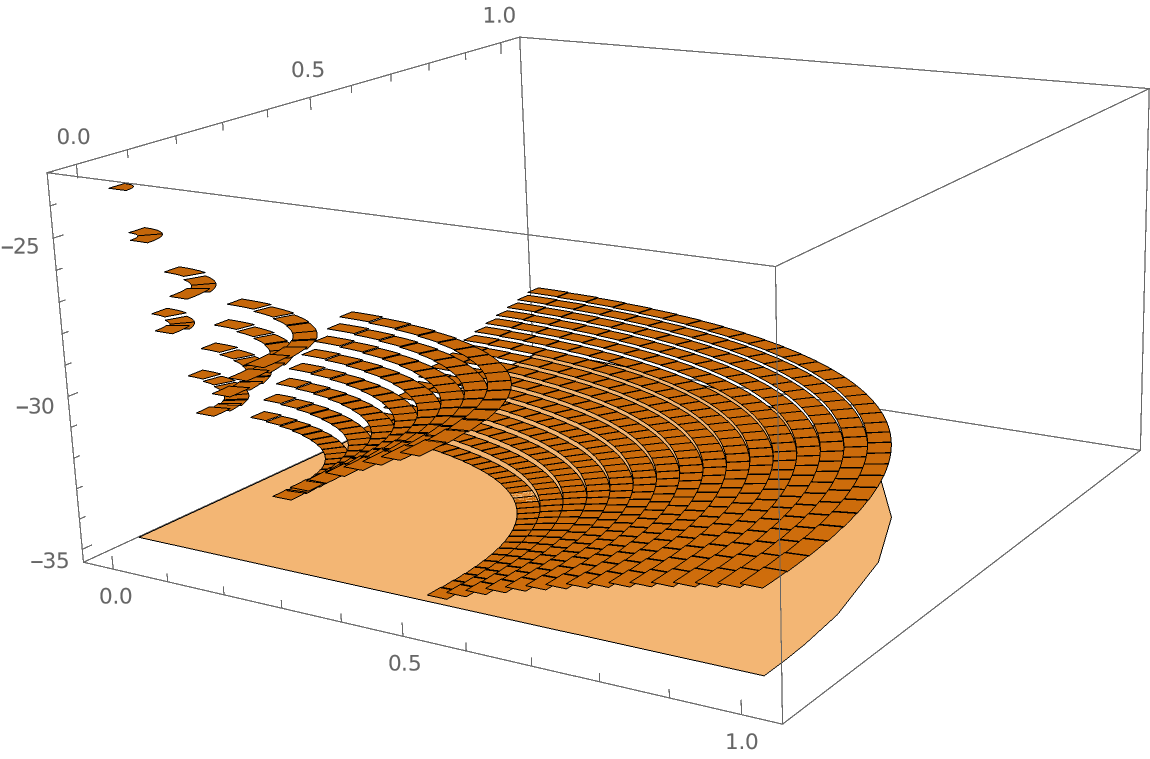}
    \caption{Example SB1: Local contributions of the $L^2$-error when approximating the function $\varphi(x,y)=x^2$ for $p=2$ (left) and $p=3$ (right). The $z$-axis shows the $\log_2$ of the local error on each element, which is equivalent (up to constants) to the $L^\infty$-error of the best $L^2$-approximation.}\label{fig:example_SB1_scaled-bdr_local_error}
\end{figure}

\paragraph{Example SB2} We consider the (single element) bicubic scaled boundary parameterization
\begin{equation}
 \f G^0(u,v) = \left(u (v + v^2 - v^3), \, u(1 - v^2)\right)^T
\end{equation}
as in Figure~\ref{fig:examples_scaled-bdr} (right). The resulting approximation errors for several functions are shown in Figure~\ref{fig:example_SB2_scaled-bdr_conv}.

\begin{figure}[!ht]
    \centering
    \begin{tikzpicture}[scale=0.6]
  \begin{axis}[
    xlabel={level $\ell$},
    ylabel={$L^2$-error},
    xmin=0,
    xmax=3,
    xtick={0,1,2,3},
    ymode=log,
    ymin=0.00001,
    ymax=0.005,
    grid=none,
    minor tick num=0,
    legend pos=south west
  ]
    \addplot[
      color=blue,
      mark=square,
      mark options={scale=0.7}
    ] coordinates {
      (0, 0.00320709233999157)
      (1, 0.0008210398523613419)
      (2, 0.0002064485049519145)
      (3, 0.000051686174497721414)
    };
    \addlegendentry{$p=2$}
    
    \addplot[
      color=gray,
      domain=2:3,
      samples=100
    ] {0.002*0.5^(2*x)};
    \addlegendentry{$(1/2)^{2\ell}$}
  \end{axis}
\end{tikzpicture}
\begin{tikzpicture}[scale=0.6]
  \begin{axis}[
    xlabel={level $\ell$},
    ylabel={$L^2$-error},
    xmin=0,
    xmax=3,
    xtick={0,1,2,3},
    ymode=log,
    ymin=0.00000001,
    ymax=0.005,
    grid=none,
    minor tick num=0,
    legend pos=south west
  ]
    \addplot[
      color=blue,
      mark=square,
      mark options={scale=0.7}
    ] coordinates {
      (0, 0.00425981)
      (1, 0.000662402)
      (2, 0.0000962254)
      (3, 0.0000134972)
    };
    \addlegendentry{$p=2$}
    
    \addplot[
      color=red,
      mark=square,
      mark options={scale=0.7}
    ] coordinates {
      (0, 0.000611935)
      (1, 0.0000786629)
      (2, 0.00000990625)
      (3, 0.00000124062)
    };
    \addlegendentry{$p=3$}
    
    \addplot[
      color=cyan,
      mark=square,
      mark options={scale=0.7}
    ] coordinates {
      (0, 0.0000872531)
      (1, 0.0000109738)
      (2, 0.00000137226)
      (3, 0.000000171537)
    };
    \addlegendentry{$p=4$}
    
    \addplot[
      color=orange,
      mark=square,
      mark options={scale=0.7}
    ] coordinates {
      (0, 0.0000188364)
      (1, 0.00000235484)
      (2, 0.000000294356)
      (3, 0.0000000367945)
    };
    \addlegendentry{$p=5$}
    
    \addplot[
      color=gray,
      domain=2:3,
      samples=100
    ] {0.003*0.5^(3*x)};
    \addlegendentry{$(1/2)^{3\ell}$}
  \end{axis}
\end{tikzpicture}
    \caption{Example SB2: $L^2$-error when approximating the functions $\varphi(x,y)=x$ (left) and $\varphi(x,y)=x^2$ (right). Note that, since $q=3$, the function $\varphi(x,y)=x$ cannot be represented exactly for $p=2$.}\label{fig:example_SB2_scaled-bdr_conv}
\end{figure}
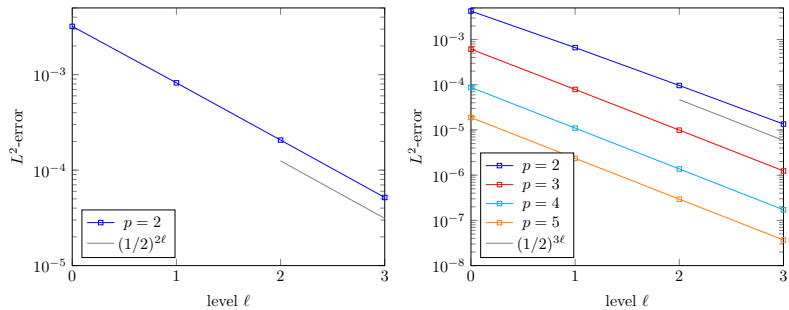

\begin{figure}[!ht]
    \centering
    \includegraphics[height=0.15\textheight]{bad_sector_q3_plot.pdf} \quad
    \includegraphics[height=0.15\textheight]{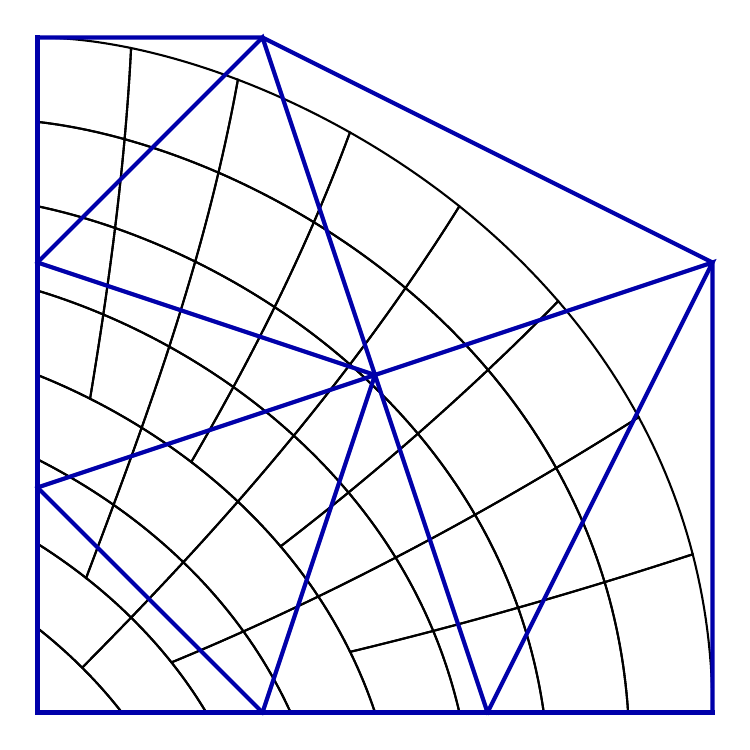} \quad
    \includegraphics[height=0.15\textheight]{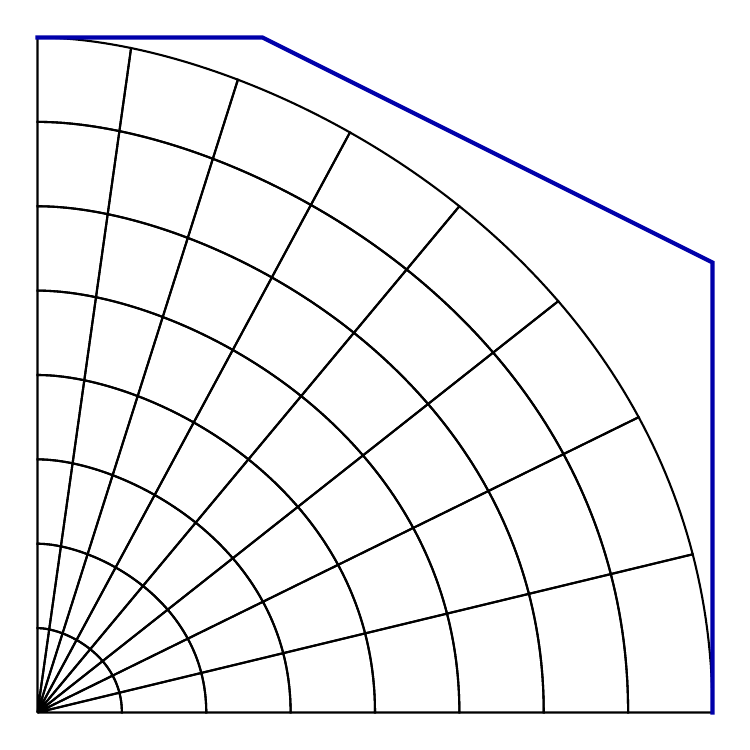} \quad
    \begin{tikzpicture}[scale=0.6]
  \begin{axis}[
    xlabel={level $\ell$},
    ylabel={$L^2$-error},
    xmin=0,
    xmax=3,
    xtick={0,1,2,3},
    ymode=log,
    ymin=0.000000000004,
    ymax=0.005,
    grid=none,
    minor tick num=0,
    legend pos=south west
  ]
    
    \addplot[
      color=blue,
      mark=square,
      mark options={scale=0.7,solid}
    ] coordinates {
      (0, 0.00327286)
      (1, 0.000397127)
      (2, 0.0000494127)
      (3, 0.00000617187)
    };
    \addlegendentry{$p=2$}
    
    \addplot[
      color=red,
      mark=square,
      mark options={scale=0.7,solid}
    ] coordinates {
      (0, 0.000247282)
      (1, 0.0000184981)
      (2, 0.00000121309)
      (3, 0.0000000767434)
    };
    \addlegendentry{$p=3$}
    
    \addplot[
      color=cyan,
      mark=square,
      mark options={scale=0.7,solid}
    ] coordinates {
      (0, 0.0000361774)
      (1, 0.0000012215)
      (2, 0.0000000388599)
      (3, 0.00000000121974)
    };
    \addlegendentry{$p=4$}
    
    \addplot[
      color=orange,
      mark=square,
      mark options={scale=0.7,solid}
    ] coordinates {
      (0, 0.00000236185)
      (1, 0.0000000370061)
      (2, 0.000000000578681)
      (3, 0.00000000000904377)
    };
    \addlegendentry{$p=5$}

    \addplot[
      color=gray,
      domain=2:3,
      samples=100
    ] {0.002*0.5^(3*x)};
    \addlegendentry{$(1/2)^{i\cdot \ell}$}
    
    \addplot[
      color=gray,
      domain=2:3,
      samples=100
    ] {0.00017*0.5^(4*x)};
    
    \addplot[
      color=gray,
      domain=2:3,
      samples=100
    ] {0.00002*0.5^(5*x)};
    
    \addplot[
      color=gray,
      domain=2:3,
      samples=100
    ] {0.000001*0.5^(6*x)};
    
    \addplot[
      color=blue,
      dashed,
      mark=x,
      mark options={scale=0.7,solid}
    ] coordinates {
      (0, 0.00425981)
      (1, 0.000662402)
      (2, 0.0000962254)
      (3, 0.0000134972)
    };
    
    \addplot[
      color=red,
      dashed,
      mark=x,
      mark options={scale=0.7,solid}
    ] coordinates {
      (0, 0.000611935)
      (1, 0.0000786629)
      (2, 0.00000990625)
      (3, 0.00000124062)
    };
    
    \addplot[
      color=cyan,
      dashed,
      mark=x,
      mark options={scale=0.7,solid}
    ] coordinates {
      (0, 0.0000872531)
      (1, 0.0000109738)
      (2, 0.00000137226)
      (3, 0.000000171537)
    };
    
    \addplot[
      color=orange,
      dashed,
      mark=x,
      mark options={scale=0.7,solid}
    ] coordinates {
      (0, 0.0000188364)
      (1, 0.00000235484)
      (2, 0.000000294356)
      (3, 0.0000000367945)
    };
    
    \addplot[
      color=blue,
      dotted,
      thick,
      mark=diamond,
      mark options={scale=0.7,solid}
    ] coordinates {
      (0, 0.00295391)
      (1, 0.00036899)
      (2, 0.0000462094)
      (3, 0.00000577993)
    };
    
    \addplot[
      color=red,
      dotted,
      thick,
      mark=diamond,
      mark options={scale=0.7,solid}
    ] coordinates {
      (0, 0.000219645)
      (1, 0.0000157646)
      (2, 0.00000102131)
      (3, 0.0000000644062)
    };
    
    \addplot[
      color=cyan,
      dotted,
      thick,
      mark=diamond,
      mark options={scale=0.7,solid}
    ] coordinates {
      (0, 0.0000278692)
      (1, 0.000000949318)
      (2, 0.00000003026)
      (3, 0.000000000950238)
    };
    
    \addplot[
      color=orange,
      dotted,
      thick,
      mark=diamond,
      mark options={scale=0.7,solid}
    ] coordinates {
      (0, 0.00000178665)
      (1, 0.0000000285842)
      (2, 0.000000000449097)
      (3, 0.00000000000702649)
    };
    
  \end{axis}
\end{tikzpicture}
    \caption{Example SB2 (left) compared to a B\'ezier-triangle based reparameterization (center left) and to the same geometry with a tensor-product refinement (center right). The control point grid of the underlying B\'ezier-triangle is visualized as well. $L^2$-error when approximating $\varphi(x,y)=x^2$ (right), dashed lines correspond to Example SB2, solid lines to the error on the reparameterization and dotted lines to the error on the tensor-product grid.}\label{fig:examples_scaled-bdr_fix}
\end{figure}

In Figure~\ref{fig:examples_scaled-bdr_fix} we compare the results to two different ways to regain optimal convergence rates. The first is based on a reparameterization of the domain, which is derived from a B\'ezier triangle. This strategy was first proposed in~\cite{takacs2015approximation}. The second approach is to keep the parameterization but to use a standard (singularly mapped) tensor-product grid. While the obtain errors are very similar, the first approach requires a smaller number of elements (here $\frac{2}{3}4^{\ell+1}+\frac{1}{3}$ compared to $4^{\ell+1}$ elements).

\subsection{Characteristic rings for Doo--Sabin subdivision}

In the following we compute $L^2$-approximation errors over characteristic rings for Doo--Sabin subdivision, cf.~\cite{doo1978behaviour}. See Figure~\ref{fig:examples_DS-rings} for visualizations of characteristic rings. The construction of such rings can be found e.g. in~\cite[Section 6.2]{peters2008subdivision}.

\begin{figure}[!ht]
    \centering
    \includegraphics[height=0.2\textheight]{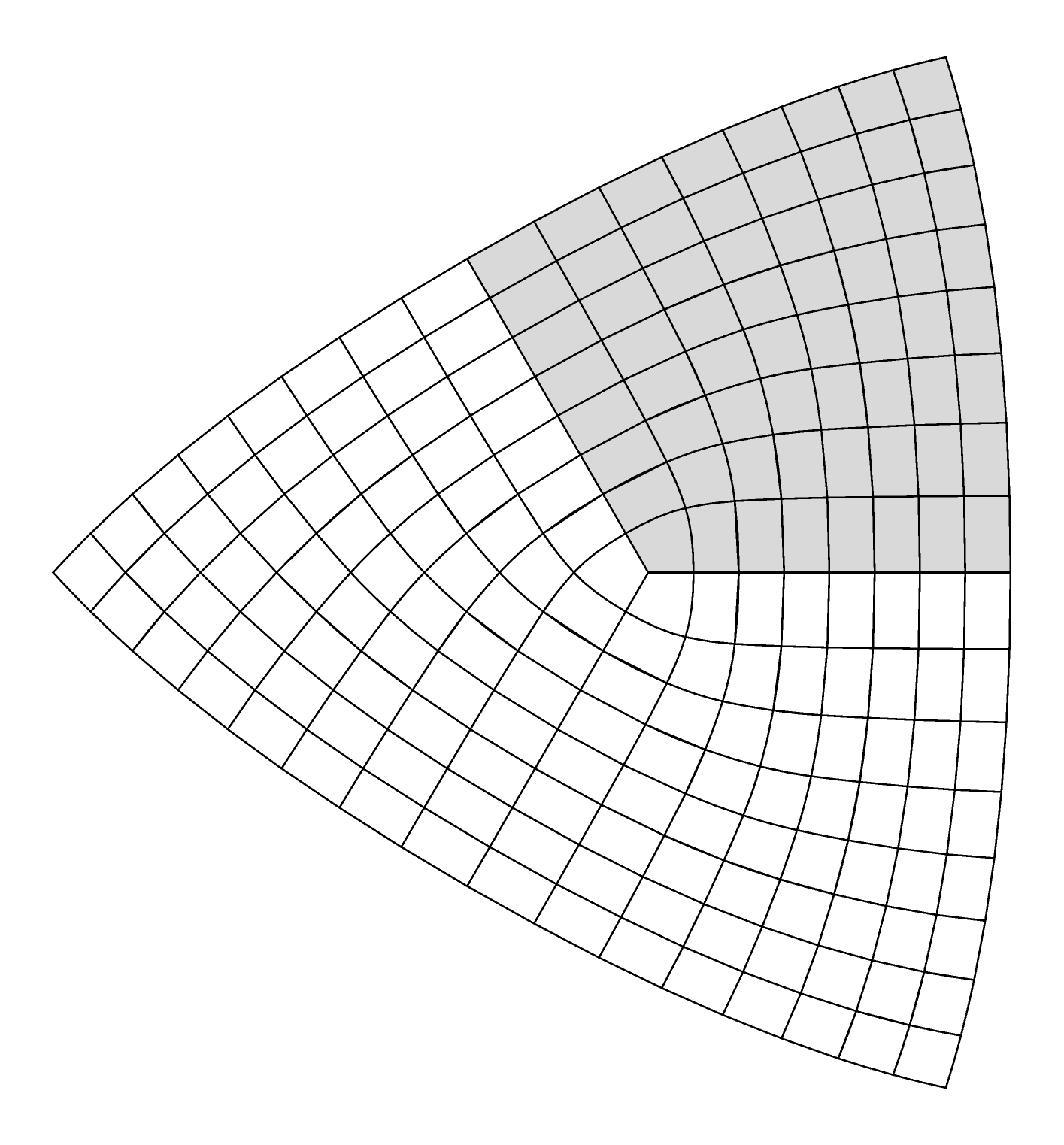} \quad
    \includegraphics[height=0.2\textheight,trim=0 0 0.1cm 0,clip]{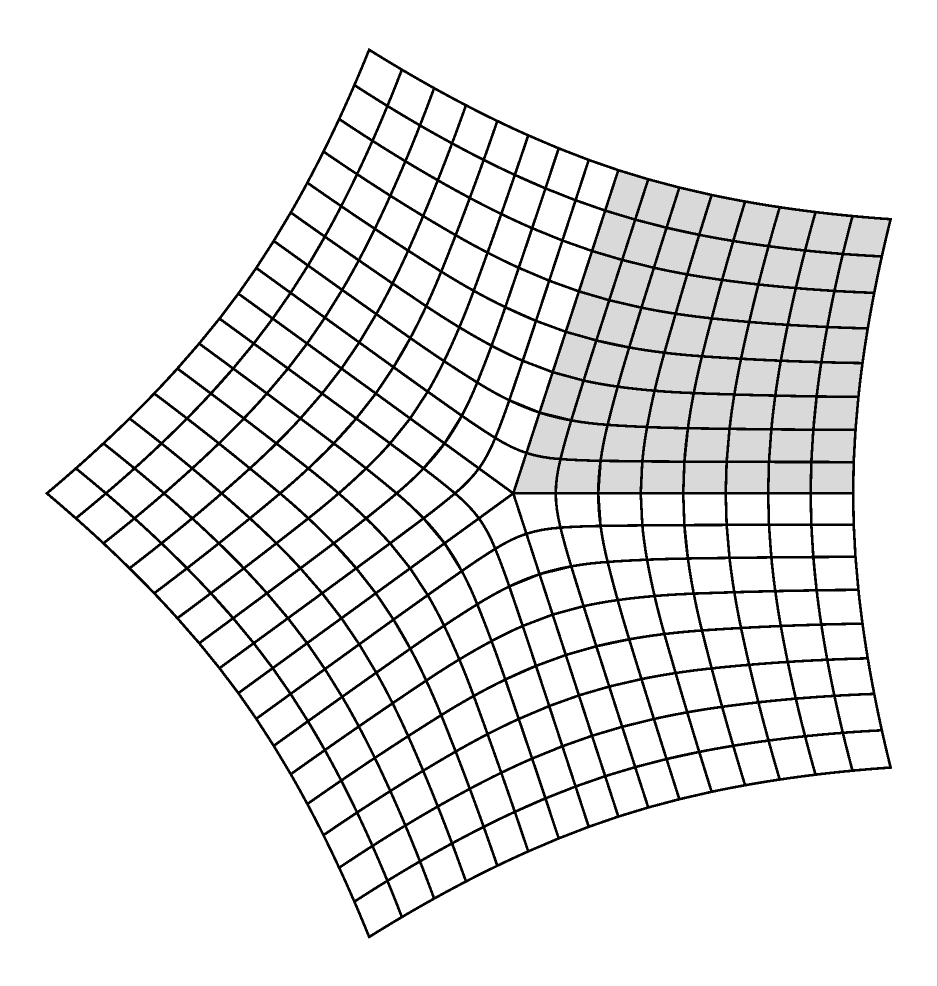} \quad
    \includegraphics[height=0.2\textheight,trim=0 0 0.1cm 0,clip]{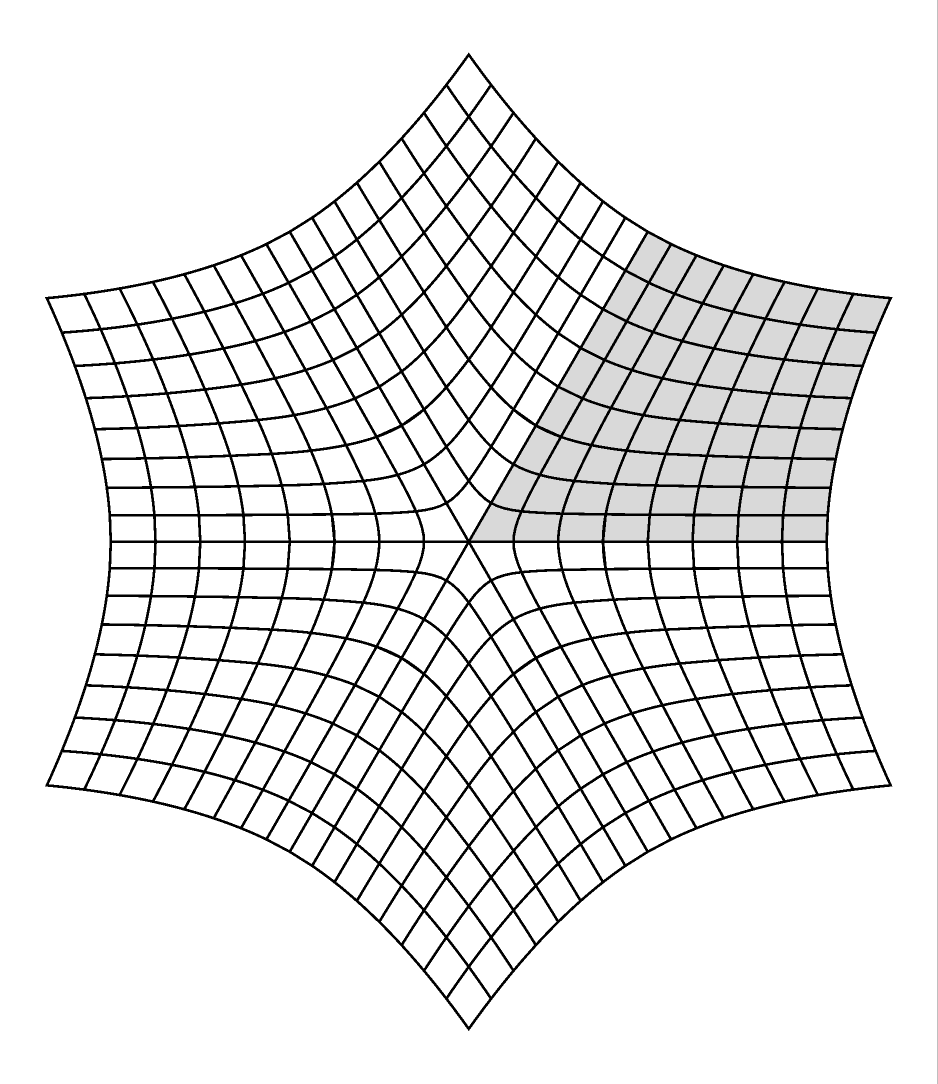} 
    \caption{Characteristic rings of Doo--Sabin subdivision for a vertices of valence three, five and six, respectively. The rings are scaled such that the outermost point on the $x$-axis lies at $(1,0)^T$.}\label{fig:examples_DS-rings}
\end{figure}

We compute approximation errors to given functions $\varphi(x,y)=x^2+y^2$ and $\varphi(x,y)=\sin(x)\cos(y+1)$ and plot the results in Figure~\ref{fig:examples_DS}. The error is always computed only on one sector of the domain as highlighted in gray in Figure~\ref{fig:examples_DS-rings}. Note that in all examples we replace the cap by a Coons-patch, which has the same reproduction degree $\underline{\kappa}_0$ as the neighboring elements.

\begin{figure}[!ht]
    \centering 
    \begin{tikzpicture}[scale=0.6]
  \begin{axis}[
    xlabel={level $\ell$},
    ylabel={$L^2$-error},
    xmin=0,
    xmax=4,
    xtick={0,1,2,3,4},
    ymode=log,
    ymin=0.00000000002,
    ymax=0.005,
    grid=none,
    minor tick num=0,
    legend pos=south west
  ]
    
    \addplot[
      color=blue,
      mark=square,
      mark options={scale=0.7,solid}
    ] coordinates {
      (0, 0.00339509)
      (1, 0.000558479)
      (2, 0.0000833176)
      (3, 0.0000118668)
      (4, 0.00000164499)
    };
    \addlegendentry{$p=2$}
    
    \addplot[
      color=red,
      mark=square,
      mark options={scale=0.7,solid}
    ] coordinates {
      (0, 0.000100043)
      (1, 0.0000134622)
      (2, 0.00000171136)
      (3, 0.000000214804)
      (4, 0.0000000268781)
    };
    \addlegendentry{$p=3$}
    
    \addplot[
      color=cyan,
      dashed,
      mark=square,
      mark options={scale=0.7,solid}
    ] coordinates {
      (0, 0.000012024)
      (1, 0.000000610672)
      (2, 0.0000000302219)
      (3, 0.00000000160568)
      (4, 0.0000000000917669)
    };
    \addlegendentry{$p=4$}
    
    \addplot[
      color=gray,
      domain=3:4,
      samples=100
    ] {0.00002*0.5^(3*x)};
    \addlegendentry{$(1/2)^{3\cdot \ell}$}
    
    \addplot[
      color=gray,
      dotted,thick,
      domain=3:4,
      samples=100
    ] {0.0000035*0.5^(4*x)};
    \addlegendentry{$(1/2)^{4\cdot \ell}$}
    
    \addplot[
      color=blue,
      dashed,
      mark=square,
      mark options={scale=0.7,solid}
    ] coordinates {
      (0, 0.00203495)
      (1, 0.000300144)
      (2, 0.0000419282)
      (3, 0.00000571874)
      (4, 0.000000769153)
    };
    
    \addplot[
      color=red,
      dashed,
      mark=square,
      mark options={scale=0.7,solid}
    ] coordinates {
      (0, 0.000102875)
      (1, 0.00000855863)
      (2, 0.000000729348)
      (3, 0.0000000757345)
      (4, 0.00000000921616)
    };    
  \end{axis}
\end{tikzpicture}
%
%
%
%
%
%
    \begin{tikzpicture}[scale=0.6]
  \begin{axis}[
    xlabel={level $\ell$},
    ylabel={$L^2$-error},
    xmin=0,
    xmax=4,
    xtick={0,1,2,3,4},
    ymode=log,
    ymin=0.000000000001,
    ymax=0.001,
    grid=none,
    minor tick num=0,
    legend pos=south west
  ]
    
    \addplot[
      color=blue,
      mark=square,
      mark options={scale=0.7,solid}
    ] coordinates {
      (0, 0.000494984)
      (1, 0.0000803715)
      (2, 0.0000119224)
      (3, 0.00000169273)
      (4, 0.000000234179)
    };
    \addlegendentry{$p=2$}
    
    \addplot[
      color=red,
      mark=square,
      mark options={scale=0.7,solid}
    ] coordinates {
      (0, 0.0000103108)
      (1, 0.00000138933)
      (2, 0.000000176669)
      (3, 0.0000000221765)
      (4, 0.00000000277495)
    };
    \addlegendentry{$p=3$}
    
    \addplot[
      color=cyan,
      dashed,
      mark=square,
      mark options={scale=0.7,solid}
    ] coordinates {
      (0, 0.000000563734)
      (1, 0.0000000212976)
      (2, 0.000000000801508)
      (3, 0.0000000000503293)
      (4, 0.00000000000371445)
    };
    \addlegendentry{$p=4$}
    
    \addplot[
      color=gray,
      domain=3:4,
      samples=100
    ] {0.000002*0.5^(3*x)};
    \addlegendentry{$(1/2)^{3\cdot \ell}$}
    
    \addplot[
      color=gray,
      dotted, thick,
      domain=3:4,
      samples=100
    ] {0.00000012*0.5^(4*x)};
    \addlegendentry{$(1/2)^{4\cdot \ell}$}
    
        \addplot[
      color=blue,
      dashed,
      mark=square,
      mark options={scale=0.7,solid}
    ] coordinates {
      (0, 0.000222605)
      (1, 0.0000377709)
      (2, 0.00000564969)
      (3, 0.000000796638)
      (4, 0.000000109013)
    };
    
    \addplot[
      color=red,
      dashed,
      mark=square,
      mark options={scale=0.7,solid}
    ] coordinates {
      (0, 0.0000103243)
      (1, 0.000000986527)
      (2, 0.000000105802)
      (3, 0.000000011596)
      (4, 0.00000000130534)
    };
  \end{axis}
\end{tikzpicture}
%
%
%
%
%
%
    \begin{tikzpicture}[scale=0.6]
  \begin{axis}[
    xlabel={level $\ell$},
    ylabel={$L^2$-error},
    xmin=0,
    xmax=4,
    xtick={0,1,2,3,4},
    ymode=log,
    ymin=0.000000000001,
    ymax=0.001,
    grid=none,
    minor tick num=0,
    legend pos=south west
  ]
    
    \addplot[
      color=blue,
      mark=square,
      mark options={scale=0.7,solid}
    ] coordinates {
      (0, 0.000623477)
      (1, 0.000101398)
      (2, 0.0000150592)
      (3, 0.00000213968)
      (4, 0.000000296158)
    };
    \addlegendentry{$p=2$}
    
    \addplot[
      color=red,
      mark=square,
      mark options={scale=0.7,solid}
    ] coordinates {
      (0, 0.0000213792)
      (1, 0.00000288165)
      (2, 0.000000366459)
      (3, 0.0000000460008)
      (4, 0.00000000575612)
    };
    \addlegendentry{$p=3$}
    
    \addplot[
      color=cyan,
      dashed,
      mark=square,
      mark options={scale=0.7,solid}
    ] coordinates {
      (0, 0.000000775943)
      (1, 0.0000000300224)
      (2, 0.0000000013535)
      (3, 0.0000000000944216)
      (4, 0.00000000000684788)
    };
    \addlegendentry{$p=4$}
    
    \addplot[
      color=gray,
      domain=3:4,
      samples=100
    ] {0.000005*0.5^(3*x)};
    \addlegendentry{$(1/2)^{3\cdot \ell}$}
    
    \addplot[
      color=gray,
      dotted, thick,
      domain=3:4,
      samples=100
    ] {0.0000002*0.5^(4*x)};
    \addlegendentry{$(1/2)^{4\cdot \ell}$}
    
    \addplot[
      color=blue,
      dashed,
      mark=square,
      mark options={scale=0.7,solid}
    ] coordinates {
      (0, 0.000265604)
      (1, 0.0000456289)
      (2, 0.00000686166)
      (3, 0.000000971201)
      (4, 0.000000133278)
    };
    
    \addplot[
      color=red,
      dashed,
      mark=square,
      mark options={scale=0.7,solid}
    ] coordinates {
      (0, 0.0000147143)
      (1, 0.00000160642)
      (2, 0.000000181548)
      (3, 0.0000000205223)
      (4, 0.00000000236646)
    };
  \end{axis}
\end{tikzpicture}
%
%
%
%
%
%
    \caption{Convergence rates for $L^2$-approximation on characteristic rings of Doo--Sabin subdivision for valence three (left), five (center) and six (right). Rates for the function $\varphi(x,y)=x^2+y^2$ shown as solid lines and rates for $\varphi(x,y)=\sin(x)\cos(y+1)$ as dashed lines. The error is computed only on the highlighted sector.}\label{fig:examples_DS}
\end{figure}
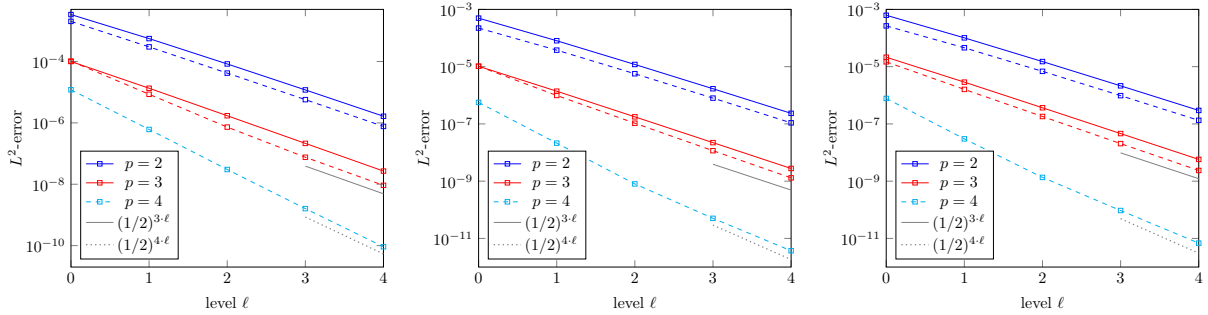

\subsection{Characteristic rings for Catmull--Clark subdivision}

In the following we compute $L^2$-approximation errors over characteristic rings for Catmull--Clark subdivision, cf.~\cite{catmull1978recursively}. See Figure~\ref{fig:examples_CC-rings} for visualizations of characteristic rings. The construction of such rings can be found in~\cite[Section 6.1]{peters2008subdivision}.

\begin{figure}[!ht]
    \centering
    \includegraphics[height=0.2\textheight]{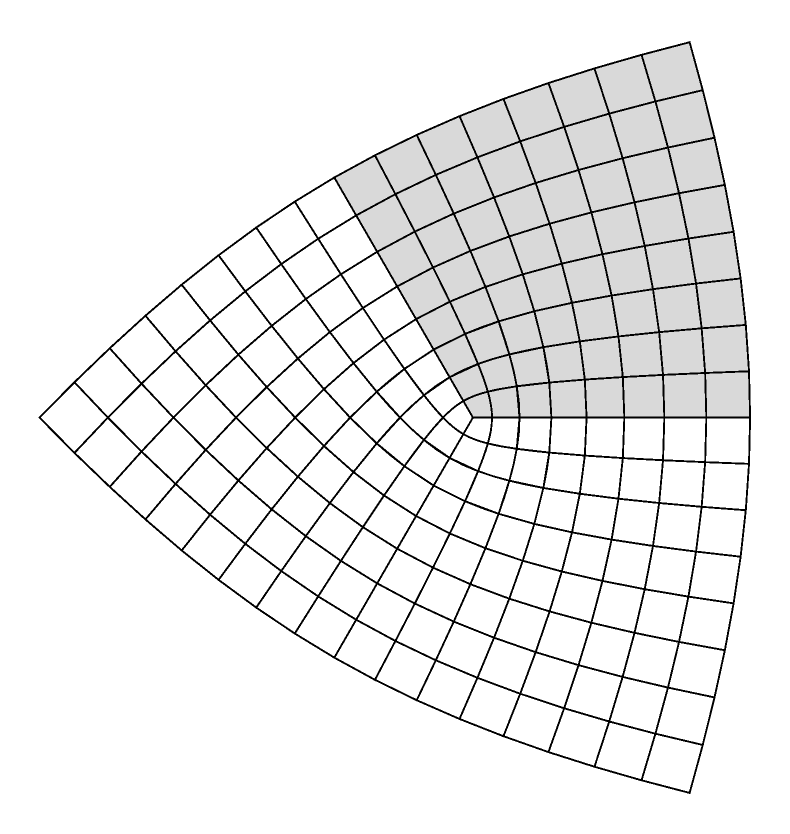} \quad
    \includegraphics[height=0.2\textheight]{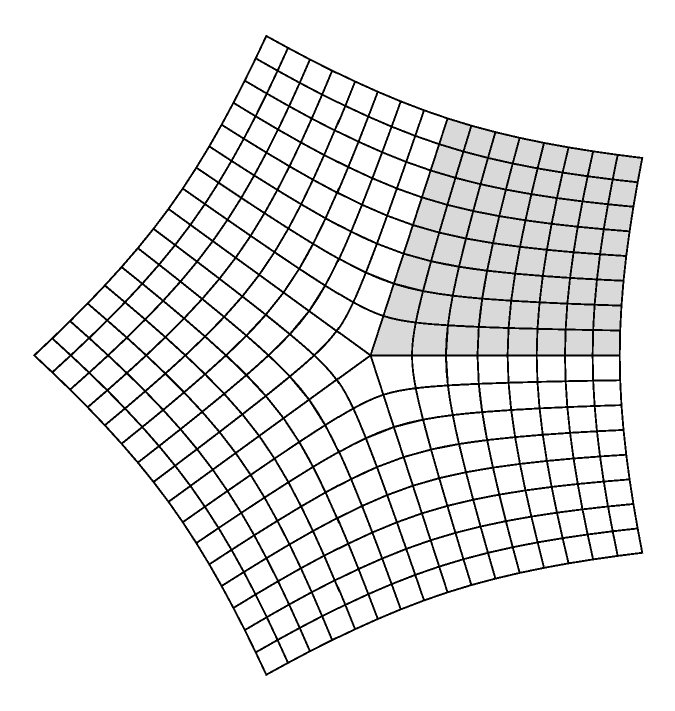} \quad
    \includegraphics[height=0.2\textheight]{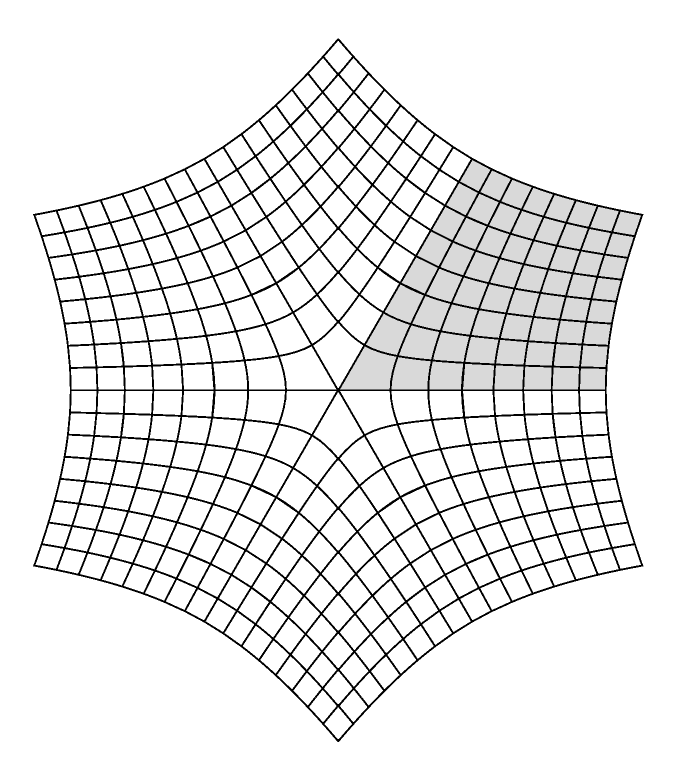} 
    \caption{Characteristic rings of Catmull--Clark subdivision for a vertices of valence three, five and six, respectively. The rings are scaled such that the outermost point on the $x$-axis lies at $(1,0)^T$.}\label{fig:examples_CC-rings}
\end{figure}

Similar to the examples for Doo--Sabin subdivision, we compute approximation errors to the functions $\varphi(x,y)=x^2+y^2$ and $\varphi(x,y)=\sin(x)\cos(y+1)$ and plot the results in Figure~\ref{fig:examples_CC}. Again, the error is computed only on one sector of the domain as highlighted in Figure~\ref{fig:examples_CC-rings} and the cap is replaced by a Coons-patch, which has the same reproduction degree $\underline{\kappa}_0$ as the neighboring elements. The results for valence three and five are moreover summarize in Tables~\ref{tab:examples_CC-3} and~\ref{tab:examples_CC-5}.

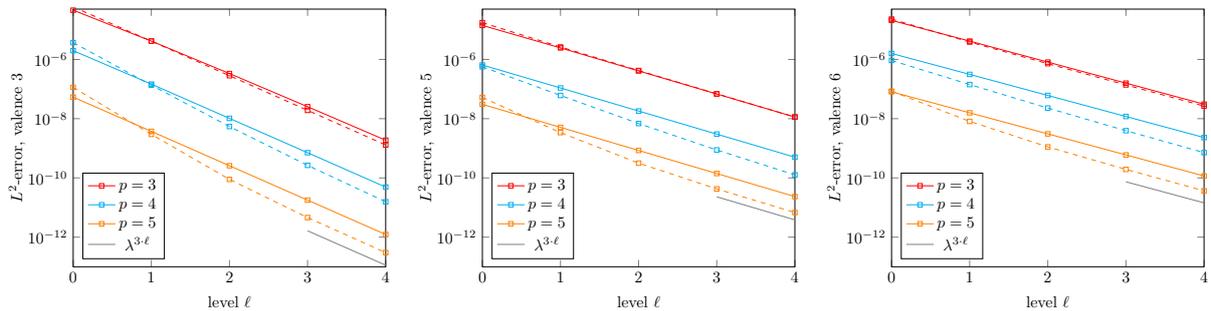
\begin{figure}[!ht]
    \centering 
    \begin{tikzpicture}[scale=0.6]
  \begin{axis}[
    xlabel={level $\ell$},
    ylabel={$L^2$-error, valence $3$},
    xmin=0,
    xmax=4,
    xtick={0,1,2,3,4},
    ymode=log,
    ymin=0.0000000000001,
    ymax=0.00005,
    grid=none,
    minor tick num=0,
    legend pos=south west
  ]
    
    \addplot[
      color=red,
      mark=square,
      mark options={scale=0.7,solid}
    ] coordinates {
      (0, 0.000045967)
      (1, 0.0000041677)
      (2, 0.000000333993)
      (3, 0.0000000253763)
      (4, 0.00000000187249)
    };
    \addlegendentry{$p=3$}
    
    \addplot[
      color=cyan,
      mark=square,
      mark options={scale=0.7,solid}
    ] coordinates {
      (0, 0.00000198163)
      (1, 0.000000146921)
      (2, 0.0000000102725)
      (3, 0.000000000710452)
      (4, 0.0000000000490275)
    };
    \addlegendentry{$p=4$}
    
    \addplot[
      color=orange,
      mark=square,
      mark options={scale=0.7,solid}
    ] coordinates {
      (0, 0.0000000533936)
      (1, 0.0000000037415)
      (2, 0.000000000258258)
      (3, 0.0000000000178128)
      (4, 0.00000000000122867)
    };
    \addlegendentry{$p=5$}
    
    \addplot[
      color=gray,
      domain=3:4,
      samples=100
    ] {0.000000005*0.5^(3.8579*x)};
    \addlegendentry{$\lambda^{3\cdot \ell}$}
        
    \addplot[
      color=red,
      dashed,
      mark=square,
      mark options={scale=0.7,solid}
    ] coordinates {
      (0, 0.0000594171)
      (1, 0.00000415725)
      (2, 0.000000282296)
      (3, 0.0000000191184)
      (4, 0.00000000129775)
    };

    \addplot[
      color=cyan,
      dashed,
      mark=square,
      mark options={scale=0.7,solid}
    ] coordinates {
      (0, 0.00000370418)
      (1, 0.000000135336)
      (2, 0.00000000543739)
      (3, 0.000000000265845)
      (4, 0.0000000000156748)
    };
    
    \addplot[
      color=orange,
      dashed,
      mark=square,
      mark options={scale=0.7,solid}
    ] coordinates {
      (0, 0.000000114028)
      (1, 0.00000000295614)
      (2, 0.0000000000892217)
      (3, 0.00000000000458535)
      (4, 0.000000000000300966)
    };
  \end{axis}
\end{tikzpicture} 
    \begin{tikzpicture}[scale=0.6]
  \begin{axis}[
    xlabel={level $\ell$},
    ylabel={$L^2$-error, valence $5$},
    xmin=0,
    xmax=4,
    xtick={0,1,2,3,4},
    ymode=log,
    ymin=0.0000000000001,
    ymax=0.00005,
    grid=none,
    minor tick num=0,
    legend pos=south west
  ]
    
    \addplot[
      color=red,
      mark=square,
      mark options={scale=0.7,solid}
    ] coordinates {
      (0, 0.0000141075)
      (1, 0.0000024494)
      (2, 0.000000409861)
      (3, 0.0000000682419)
      (4, 0.0000000113543)
    };
    \addlegendentry{$p=3$}
    
    \addplot[
      color=cyan,
      mark=square,
      mark options={scale=0.7,solid}
    ] coordinates {
      (0, 0.000000648408)
      (1, 0.000000108902)
      (2, 0.0000000181236)
      (3, 0.00000000301516)
      (4, 0.000000000501616)    
    };
    \addlegendentry{$p=4$}
    
    \addplot[
      color=orange,
      mark=square,
      mark options={scale=0.7,solid}
    ] coordinates {
      (0, 0.0000000305331)
      (1, 0.00000000508874)
      (2, 0.000000000846599)
      (3, 0.000000000140844)
      (4, 0.0000000000234314)    
    };
    \addlegendentry{$p=5$}
    
    \addplot[
      color=gray,
      domain=3:4,
      samples=100
    ] {0.000000005*0.5^(2.5876*x)};
    \addlegendentry{$\lambda^{3\cdot \ell}$}

    \addplot[
      color=red,
      dashed,
      mark=square,
      mark options={scale=0.7,solid}
    ] coordinates {
      (0, 0.0000174594)
      (1, 0.00000263747)
      (2, 0.000000422304)
      (3, 0.0000000689933)
      (4, 0.0000000113744)
    };

    \addplot[
      color=cyan,
      dashed,
      mark=square,
      mark options={scale=0.7,solid}
    ] coordinates {
      (0, 0.000000564061)
      (1, 0.0000000608434)
      (2, 0.00000000686911)
      (3, 0.000000000879656)
      (4, 0.00000000012667)    
    };
    
    \addplot[
      color=orange,
      dashed,
      mark=square,
      mark options={scale=0.7,solid}
    ] coordinates {
      (0, 0.0000000523428)
      (1, 0.00000000338587)
      (2, 0.000000000315077)
      (3, 0.0000000000425961)
      (4, 0.00000000000678807)    
    };
  \end{axis}
\end{tikzpicture} 
    \begin{tikzpicture}[scale=0.6]
  \begin{axis}[
    xlabel={level $\ell$},
    ylabel={$L^2$-error, valence $6$},
    xmin=0,
    xmax=4,
    xtick={0,1,2,3,4},
    ymode=log,
    ymin=0.0000000000001,
    ymax=0.00005,
    grid=none,
    minor tick num=0,
    legend pos=south west
  ]
    
    \addplot[
      color=red,
      mark=square,
      mark options={scale=0.7,solid}
    ] coordinates {
      (0, 0.000020824)
      (1, 0.00000416435)
      (2, 0.000000813384)
      (3, 0.000000158485)
      (4, 0.0000000308724)
    };
    \addlegendentry{$p=3$}
    
    \addplot[
      color=cyan,
      mark=square,
      mark options={scale=0.7,solid}
    ] coordinates {
      (0, 0.00000159426)
      (1, 0.000000312581)
      (2, 0.0000000608985)
      (3, 0.0000000118626)
      (4, 0.00000000231073)
    };
    \addlegendentry{$p=4$}
    
    \addplot[
      color=orange,
      mark=square,
      mark options={scale=0.7,solid}
    ] coordinates {
      (0, 0.0000000808668)
      (1, 0.0000000157713)
      (2, 0.00000000307213)
      (3, 0.000000000598425)
      (4, 0.000000000116568)
    };
    \addlegendentry{$p=5$}    
    
    \addplot[
      color=gray,
      domain=3:4,
      samples=100
    ] {0.00000001*0.5^(2.35999*x)};
    \addlegendentry{$\lambda^{3\cdot \ell}$}
        
    \addplot[
      color=red,
      dashed,
      mark=square,
      mark options={scale=0.7,solid}
    ] coordinates {
      (0, 0.0000230185)
      (1, 0.00000386903)
      (2, 0.000000719886)
      (3, 0.000000137975)
      (4, 0.0000000267018)
    };

    \addplot[
      color=cyan,
      dashed,
      mark=square,
      mark options={scale=0.7,solid}
    ] coordinates {
      (0, 0.000000926312)
      (1, 0.000000142)
      (2, 0.0000000228074)
      (3, 0.00000000393271)
      (4, 0.000000000714318)
    };
    
    \addplot[
      color=orange,
      dashed,
      mark=square,
      mark options={scale=0.7,solid}
    ] coordinates {
      (0, 0.0000000848484)
      (1, 0.00000000809744)
      (2, 0.00000000111513)
      (3, 0.000000000193985)
      (4, 0.0000000000366864)
    };
  \end{axis}
\end{tikzpicture} 
    \caption{Convergence rates for $L^2$-approximation on characteristic rings of Catmull--Clark subdivision for valence three (left), five (center) and six (right). Rates for the function $\varphi(x,y)=x^2+y^2$ shown as solid lines and rates for $\varphi(x,y)=\sin(x)\cos(y+1)$ as dashed lines. The error is computed only on the highlighted sector. All rates tend to $\lambda^3$, where $\lambda^3 \sim (1/2)^{3.8579}$ for valence three, $\lambda^3 \sim (1/2)^{2.5876}$ for valence five and $\lambda^3 \sim (1/2)^{2.35999}$ for valence six, respectively.}\label{fig:examples_CC}
\end{figure}

\begin{table}[ht]
 \centering
 \begin{tabular}{crrrrrr}
   & $p=3$ ($\log_2$) & $p=3$ ($\log_\lambda$) & $p=4$ ($\log_2$) & $p=4$ ($\log_\lambda$) & $p=5$ ($\log_2$) & $p=5$ ($\log_\lambda$) \\ \hline
  $\ell=0$ & $-14.4090$ & $11.2049$ & $-18.9449$ & $14.7321$ & $-24.1588$ & $18.7865$ \\
  $\ell=1$ & $-17.8723$ & $13.8980$ & $-22.6985$ & $17.6509$ & $-27.9937$ & $21.7687$ \\
  $\ell=2$ & $-21.5137$ & $16.7296$ & $-26.5366$ & $20.6356$ & $-31.8505$ & $24.7678$ \\
  $\ell=3$ & $-25.2319$ & $19.6211$ & $-30.3905$ & $23.6325$ & $-35.7083$ & $27.7678$ \\
  $\ell=4$ & $-28.9924$ & $22.5453$ & $-34.2476$ & $26.6319$ & $-39.5660$ & $30.7676$ 
 \end{tabular}
 \caption{$L^2$-errors for approximating $\varphi(x,y)=x^2+y^2$ on a Catmull--Clark ring of valence three. We can observe that the convergence rates tend to $2^{-3.85789} \sim \lambda^3$, independent of the degree $p \leq 5$. For $p\geq 6$ the function $\varphi(x,y)=x^2+y^2$ can be reproduced exactly and all errors are zero.}\label{tab:examples_CC-3}
\end{table}

\begin{table}[ht]
 \centering
 \begin{tabular}{crrrrrr}
   & $p=3$ ($\log_2$) & $p=3$ ($\log_\lambda$) & $p=4$ ($\log_2$) & $p=4$ ($\log_\lambda$) & $p=5$ ($\log_2$) & $p=5$ ($\log_\lambda$) \\ \hline
  $\ell=0$ & $-16.1132$ & $18.6814$ & $-20.5618$ & $23.8390$ & $-24.9651$ & $28.9441$ \\
  $\ell=1$ & $-18.6391$ & $21.6099$ & $-23.1355$ & $26.8230$ & $-27.5500$ & $31.9411$ \\
  $\ell=2$ & $-21.2184$ & $24.6002$ & $-25.7226$ & $29.8224$ & $-30.1376$ & $34.9411$ \\
  $\ell=3$ & $-23.8048$ & $27.5989$ & $-28.3102$ & $32.8224$ & $-32.7252$ & $37.9411$ \\
  $\ell=4$ & $-26.3922$ & $30.5987$ & $-30.8978$ & $35.8224$ & $-35.3128$ & $40.9411$ 
 \end{tabular}
 \caption{$L^2$-errors for approximating $\varphi(x,y)=x^2+y^2$ on a Catmull--Clark ring of valence five. We can observe that the convergence rates tend to $2^{-2.58758} \sim \lambda^3$, independent of the degree $p \leq 5$. For $p\geq 6$ the function $\varphi(x,y)=x^2+y^2$ can be reproduced exactly and all errors are zero.}\label{tab:examples_CC-5}
\end{table}

In Figure~\ref{fig:example_DSCC} we show a comparison between approximation errors on characteristic rings for Doo--Sabin and Catmull--Clark subdivision. Note that the domains are not exactly the same, but quite similar. For a fixed degree $p$, the approximation on the Doo--Sabin ring is better than on the Catmull--Clark ring. This has two reasons, on the one hand the scaling factor $\lambda_{DS}=0.5 < 0.549988 = \lambda_{CC_5}$, on the other hand the reproduction degree is potentially larger, i.e., $\kappa_{DS} = \lfloor \frac{p}{2} \rfloor \geq \lfloor \frac{p}{3} \rfloor = \kappa_{CC}$. This effect is clearly visible for $p=4$.

\begin{figure}[!ht]
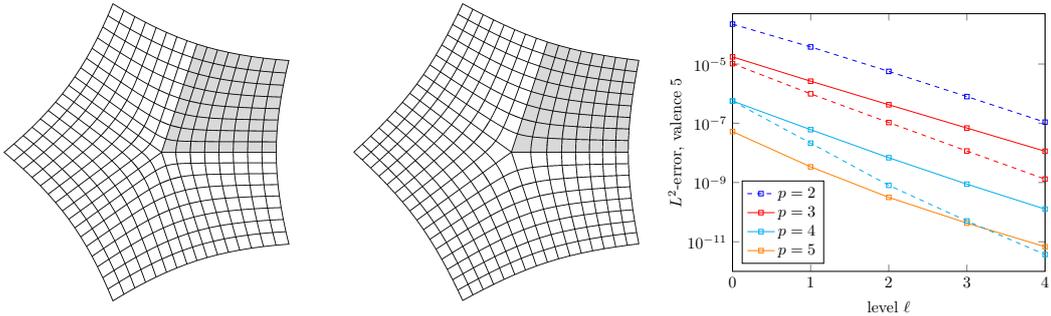

    \centering 
    \includegraphics[height=0.2\textheight,trim=0 0 0.1cm 0,clip]{DS_valence5_mesh.pdf} \quad
    \includegraphics[height=0.2\textheight]{CC_valence5_mesh.pdf}
    \begin{tikzpicture}[scale=0.6]
  \begin{axis}[
    xlabel={level $\ell$},
    ylabel={$L^2$-error, valence $5$},
    xmin=0,
    xmax=4,
    xtick={0,1,2,3,4},
    ymode=log,
    ymin=0.000000000001,
    ymax=0.0005,
    grid=none,
    minor tick num=0,
    legend pos=south west
  ]
    \addplot[
      color=blue,
      dashed,
      mark=square,
      mark options={scale=0.7,solid}
    ] coordinates {
      (0, 0.000222605)
      (1, 0.0000377709)
      (2, 0.00000564969)
      (3, 0.000000796638)
      (4, 0.000000109013)
    };
    \addlegendentry{$p=2$}
        
    \addplot[
      color=red,
      mark=square,
      mark options={scale=0.7,solid}
    ] coordinates {
      (0, 0.0000174594)
      (1, 0.00000263747)
      (2, 0.000000422304)
      (3, 0.0000000689933)
      (4, 0.0000000113744)
    };
    \addlegendentry{$p=3$}

    \addplot[
      color=cyan,
      mark=square,
      mark options={scale=0.7,solid}
    ] coordinates {
      (0, 0.000000564061)
      (1, 0.0000000608434)
      (2, 0.00000000686911)
      (3, 0.000000000879656)
      (4, 0.00000000012667)    
    };
    \addlegendentry{$p=4$}
    
    \addplot[
      color=orange,
      mark=square,
      mark options={scale=0.7,solid}
    ] coordinates {
      (0, 0.0000000523428)
      (1, 0.00000000338587)
      (2, 0.000000000315077)
      (3, 0.0000000000425961)
      (4, 0.00000000000678807)    
    };
    \addlegendentry{$p=5$}

    \addplot[
      color=red,
      dashed,
      mark=square,
      mark options={scale=0.7,solid}
    ] coordinates {
      (0, 0.0000103243)
      (1, 0.000000986527)
      (2, 0.000000105802)
      (3, 0.000000011596)
      (4, 0.00000000130534)
    };
    
    \addplot[
      color=cyan,
      dashed,
      mark=square,
      mark options={scale=0.7,solid}
    ] coordinates {
      (0, 0.000000563734)
      (1, 0.0000000212976)
      (2, 0.000000000801508)
      (3, 0.0000000000503293)
      (4, 0.00000000000371445)
    };
  \end{axis}
\end{tikzpicture} 
    \caption{Comparison between Doo--Sabin (left) and Catmull--Clark (center) subdivision. In the convergence plot (right) errors for Doo--Sabin are shown as dashed lines and for Catmull--Clark as solid lines.}\label{fig:example_DSCC}
\end{figure}

\section{Implications for subdivision based isogeometric analysis}
\label{sec:subdivision-considerations}

In the following we discuss which conclusions can be drawn from the presented results and how this relates to isogeometric discretizations based on subdivision surfaces or subdivision volumes. First of all, note that Theorems~\ref{thm:global-lower} and~\ref{thm:Linfty-estimate} only provide upper bounds for the convergence rates. So, even if the bounds are optimal, the rates may not be attainable.

\paragraph{Reasons for suboptimal convergence}
When approximating a given function $\varphi$ by a function $\varphi_h$ taken from a discretization space $\mathcal{S}^\ell$ which is subspace of $\mathcal{S}[p;\mathcal{M}^\ell_\ast]$, the convergence rate may not be optimal for several reasons:
\begin{enumerate}
 \item The function $\varphi$ may not be sufficiently regular.
 \item The continuity conditions of the space $\mathcal{S}^\ell \subset \mathcal{S}[p;\mathcal{M}^\ell_\ast]$ (e.g. $C^1$- or $C^2$-smoothness on all of $\Omega$ or on the subdomain $\Omega\setminus \omega^{\ell+1}_\ast$) may be too restrictive.
 \item The space $\mathcal{S}[p;\omega^{\ell+1}_\ast]$ at the cap may have suboptimal approximation properties.
 \item The rates in Theorem~\ref{thm:global-lower} or Theorem~\ref{thm:Linfty-estimate} are not optimal.
\end{enumerate}
Away from the extraordinary point, the smoothness conditions do not have a detrimental effect on the approximation. However, at the cap $\omega^{\ell+1}_\ast$ this may be different and the discretization space may be too small. Nonetheless, this can be resolved by enriching the space $\mathcal{S}[p;\omega^{\ell+1}_\ast]$ e.g. by performing locally additional refinement or by increasing the degree locally.
In the following we summarize the results obtained from Theorems~\ref{thm:global-lower} and~\ref{thm:Linfty-estimate}.

\paragraph{Summary of results for Doo--Sabin subdivision}
In case of Doo--Sabin subdivision, we have $\lambda=1/2$ for all valences and $p=2$. For valences $\neq 4$ we have $\underline{\kappa}_0=1$. Thus, following Remark~\ref{rem:rates-summary}, we are in case (b), i.e., $\lambda^{2+\underline{\kappa}_0-r}=1/2^{p+1-r}$. As a consequence, the rates for the $L^\infty$-, $L^2$- and (broken) $H^1$-error for piecewise polynomials are
\[
 1/2^{2\ell} \sim h^2, \qquad \sqrt{1+\ell}/2^{3\ell} \sim \sqrt{1-\log_2(h)} \; h^{3} \qquad \mbox{ and } \qquad \sqrt{1+\ell}/2^{2\ell} \sim \sqrt{1-\log_2(h)} \; h^{2},
\]
respectively. These rates are the best possible for Doo--Sabin subdivision, even if the function spaces are enriched in any $h$-dependent neighborhood of the extraordinary vertex. A summary can be found in Table~\ref{tab:summary-DS}.

\begin{table}[ht]
 \centering
 \begin{tabular}{lrrr}
  & $L^\infty$-rate & $L^2$-rate  & $H^1$-rate \\ \hline
  DS, valence $=4$ & $h^{3}$ & $h^{3}$ & $h^2$ \\
  DS, valence $\neq 4$ & $h^{2}$ & $\sqrt{1-\log_2(h)} \; h^{3}$ & $\sqrt{1-\log_2(h)} \; h^{2}$
 \end{tabular}
 \caption{Best possible convergence rates for the $L^\infty$- and $L^2$-error. The $H^1$-error converges like the $L^\infty$-error. As reference we also give the optimal rates for valence four, which are those of standard biquadratic tensor-product B-splines.}\label{tab:summary-DS}
\end{table}

\paragraph{Summary of results for Catmull--Clark subdivision}
In case of Catmull--Clark subdivision, for valences $\neq 4$, we have $0.410097 < \lambda <\frac{3+\sqrt{5}}{8} \approx 0.6545085$, $\underline{\kappa}_0=1$ and $p=3$. Thus, following Remark~\ref{rem:rates-summary}, we are always in case (a), i.e., $\lambda^{2+\underline{\kappa}_0-r}>1/2^{p+1-r}$. As a consequence, the rates for the $L^\infty$-, $L^2$- and (broken) $H^r$-error for piecewise polynomials are
\[
 \lambda^{2}, \qquad \lambda^{3} \qquad \mbox{ and } \qquad \lambda^{3-r},
\]
respectively. The values of the scaling factor $\lambda$ for common valences are $\lambda \approx 0.410097$ for valence three, $\lambda \approx 0.549988$ for valence five and $\lambda \approx 0.579682$ for valence six. We summarize the expected, best possible convergence rates in Table~\ref{tab:summary-CC}.

\begin{table}[ht]
 \centering
 \begin{tabular}{lrrr}
  & $L^\infty$-rate & $L^2$-rate & $H^1$-rate  \\ \hline
  CC, valence $3$ & $h^{2.57193} \sim 2^{-2.57193}$ & $h^{3.85789} \sim 2^{-3.85789}$ & $h^{2.57193} \sim 2^{-2.57193}$ \\
  CC, valence $4$ & $h^{4} \sim 2^{-4}$ & $h^{4}\sim 2^{-4}$ & $h^{3} \sim 2^{-3}$ \\
  CC, valence $5$ & $h^{2} \sim 2^{-1.72505}$ & $h^{3}\sim 2^{-2.58758}$ & $h^{2} \sim 2^{-1.72505}$ \\
  CC, valence $6$ & $h^{2} \sim 2^{-1.57333}$ & $h^{3}\sim 2^{-2.35999}$ & $h^{2} \sim 2^{-1.57333}$
 \end{tabular}
 \caption{Best possible convergence rates for the $L^\infty$-, $L^2$- and (broken) $H^1$-error. As reference we also give the optimal rates for valence four, which are those of standard bicubic tensor-product B-splines.}\label{tab:summary-CC}
\end{table}

\paragraph{Extension from characteristic rings to general spline rings} Let us consider a planar domain that is constructed through a subdivision procedure from a suitable control mesh. Around each extraordinary vertex a sequence of rings is created. As the mesh is refined, the rings converge to affinely transformed characteristic rings. Thus, the rings are not self-similar, but become more and more self-similar. This is summarized in the following lemma, which derives directly from the asymptotic expansion of rings of subdivision surfaces~\cite[Eq. (5.2)]{peters2008subdivision}. One can show that there exists a limit parameterization $\f G_n^{\lim{}}$ for all elements $\omega_n^i$, such that  
 \[
  \| \f G_n^i - \lambda^i \f G_n^{\lim{}} \|_{L^\infty(B)} = o(\lambda^i),
 \]
as $i$ goes to infinity. Thus, the sequence of element parameterizations $\{\f G_n^i\}_{i=0}^\infty$ is from a compact set of possible element parameterizations, for which, assuming $i$ sufficiently large, the corresponding sequence of constants $\underline{C}_0^i$ has a minimum which is larger zero. Therefore, the approximation properties derived above are also valid for general subdivision rings, not only characteristic rings. There is no reason to believe that for surface domains the behavior is different. The biggest issue there is that at each level one has to consider either the infinite sequence of rings or an approximation which depends on the level.

\paragraph{Modified subdivision schemes}
The results suggest that modifying the subdivision scheme to tune the eigenvalues that are smaller than the subdominant eigenvalue $\lambda$ will not have an effect on the overall convergence behavior, as the convergence rate of the $L^\infty$-error can not be improved by that.

Tuning the subdominant eigenvalue $\lambda$ can be used to improve the convergence rates, but the effect of such a tuning on the actual convergence rates is not clear, since the study in this paper only provides upper bounds on the rates. Shrinking $\lambda$ may have a detrimental effect on errors in higher order Sobolev norms, as the local element curvature can become larger. This should be studied in more detail in future work. The choice $\lambda=0.39$ presented in \cite{ma2019subdivision} corresponds to $\lambda = (0.5^4)^{1/3}$, which results in an optimal $L^2$ rate. The choice $\lambda=0.26 \sim (0.5^4)^{1/2}$ presented in~\cite{wei2021tuned} (almost) optimizes the $L^\infty$ rate. We also refer to the studies~\cite{li2019hybrid,wang2023extended}. The approaches converge optimally for several tested second order problems. However, the effect on higher order problems is not understood yet.

\paragraph{Loop subdivision and other triangle based schemes}
Even though not considered here, triangle based subdivision schemes suffer from the same issues as quadrilateral based ones, as the error estimates for isoparametric finite elements are not specific to the elements being quadrilaterals. Also for triangles, the bounds will depend on the reproducibility of polynomials of a certain total degree over characteristic rings.

\paragraph{Volumetric subdivision schemes}
Even though volumetric subdivision schemes do not fit exactly in the framework discussed in Subsection~\ref{subsec:higher-dim} (because they are not refined by standard bisection near extraordinary edges), they will nonetheless suffer the same reduction in convergence. For isoparametric discretizations over subdivision volumes the $L^\infty$-errors around extraordinary vertices will in general converge with rate $\lambda^2$, independent of the degree of the scheme.

\section{Conclusions}
\label{sec:conclusion}

In this paper we could show that higher order approximation with piecewise polynomial discretization spaces over self-similar meshes of curved finite elements is in general suboptimal. As specific examples we considered scaled boundary parameterizations and characteristic rings of subdivision surfaces. The results extend to any quadrilateral subdivision surfaces where the elements do not converge to parallelograms sufficiently fast. While for scaled boundary parameterizations one can always tune the refinement to any desired $0<\lambda<1$ or use one of the approaches presented in Figure~\ref{fig:examples_scaled-bdr_fix} to obtain optimal convergence rates again, this is not possible for subdivision-like parameterizations.

Even though the results here were presented only for 2D domains, the general ideas and proofs extend directly to higher dimensions. However, the local scaling of elements depends on the self-similarity of rings. Already in 3D, there are several possibilities to construct self similar elements that fill a certain domain. The structure may be cylindrical, a scaled boundary parameterization or a subdivision volume around an extraordinary edge or extraordinary vertex.

When performing isogeometric simulations on subdivision surfaces and volumes, the negative results presented here should be taken into account. Modifying subdivision schemes or enriching the analysis spaces to improve approximation properties are challenging tasks for future research. Alternatively, one may also replace subdivision surface parametrizations (at least locally) be finite, refineable discretizations using geometric continuity, as in~\cite{marsala2022g1}, or multi-patch discretizations, cf.~\cite{hughes2021smooth}.

\section*{Acknowledgements}

I would like to mention that inspirations for this work came from many lengthy discussions with fellow researchers, most importantly with Roland Maier, Philipp Morgenstern, Stefan Takacs and Deepesh Toshniwal. I would like to thank them for their suggestions, without which this paper would not have been possible.

\end{document}